\newtheorem{theorem}{Theorem}
\newtheorem{corollary}[theorem]{Corollary}
\newtheorem{lemma}[theorem]{Lemma}
\newtheorem{proposition}[theorem]{Proposition}
\newtheorem{problem}[theorem]{Open Problem}
\newtheorem{conjecture}{Conjecture}
\newcommand{\YZ}[1]{{#1}}
\newcommand{\fas}{{\rm fas}}
\newcommand{\acs}{{\rm acs}}
\newcommand{\AY}[1]{{#1}}
\newcommand{\LH}[1]{{#1}}
\newcommand{\GG}[1]{{#1}}
\newcommand{\2}{\vspace{0.2cm}}
\title{Upper bounds on minimum size of feedback arc set of directed multigraphs with bounded degree}
\author{G. Gutin, H. Lei, A. Yeo, Y. Zhou}
\author{ Gregory Gutin\thanks{Department of Computer Science. Royal Holloway University of London, UK, and School of Mathematical Sciences and LPMC, Nankai University, Tianjin 300071, China.  {\tt g.gutin@rhul.ac.uk}} \hspace{2mm}  Hui Lei\thanks{ School of Statistics and Data Science, LPMC and KLMDASR, Nankai University, Tianjin 300071, China. Partially supported by the NSFC grant (No. 12371351). {\tt hlei@nankai.edu.cn}} \hspace{2mm}  Anders Yeo\thanks {Department of Mathematics and Computer Science, University of Southern Denmark, Denmark, and Department of Mathematics and Applied Mathematics, University of Johannesburg, South Africa. {\tt yeo@imada.sdu.dk}} \hspace{2mm}  Yacong Zhou\thanks{Department of Computer Science. Royal Holloway University of London, UK. {\tt Yacong.Zhou.2021@live.rhul.ac.uk}} }
\begin{document}
	
	\maketitle
	
	\begin{abstract}
		An oriented multigraph is a directed multigraph without directed 2-cycles.	Let ${\rm fas}(D)$ denote the minimum size of a feedback arc set in an oriented multigraph $D$.
		The degree of a vertex is the sum of its out- and in-degrees. In several papers, upper bounds for ${\rm fas}(D)$ were obtained for oriented multigraphs $D$ with maximum degree upper-bounded by a constant. Hanauer (2017) conjectured that ${\rm fas}(D)\le 2.5n/3$ for every oriented multigraph $D$ with $n$ vertices and maximum degree at most 5.
		We prove a strengthening of the conjecture: ${\rm fas}(D)\le m/3$ holds for every oriented multigraph $D$ with $m$ arcs and maximum degree at most 5. This bound is tight and improves a bound of Berger and Shor (1990,1997).
		It would be interesting to determine $c$ such that ${\rm fas}(D)\le cn$  for every oriented multigraph $D$ with $n$ vertices and maximum degree at most 5 such that the bound is tight. We show that
		\AY{$\frac{5}{7}\le c  \le \frac{24}{29} < \frac{2.5}{3}$}.
	\end{abstract}
	
	\section{Introduction}
	\label{sec:intro}
	An {\em oriented multigraph} $D$ is a directed multigraph without loops and directed cycles of length 2. An {\em oriented graph} is an oriented multigraph with no multiple arcs. If an oriented multigraph $D$ is clear from the context, we denote its number of vertices by $n$ and number of arcs by $m$. The \LH{{\em degree} $d_{D}(v)$} of a vertex $v$ in $D$ is the number of arcs of $D$ incident to $v$ (all arcs where $v$ is the tail or head are counted). \LH{We say that $D$ is {\em degree-$k$} if every vertex in $D$ has degree $k$.} The maximum degree of $D$ is denoted by $\Delta(D)$ or just $\Delta$ if $D$ is clear from the context. A set $F$ of arcs of $D$  is a {\em feedback arc set}  if $D-F$ has no directed cycle.
	We mainly follow terminology and notation of \cite{BJG}. However, for convenience of the reader we introduce most required terminology and notation in this section.
	
	The {\sc Minimum Feedback Arc Set} problem is a well-known NP-hard problem (it is NP-hard even on tournaments \cite{Alon2006,CharbitTY2007}) with numerous applications, see e.g. \cite{Alon2002,ELS1993,LS1991}. The problem is as follows: given a directed multigraph (arc-weighted digraph $D$, resp.) find a feedback arc set $F$ of $D$ with minimum number of arcs (of minimum weight, resp.), denoted by $\fas(D)$. The problem of finding  $\fas(D)$ is complementary to the problem of finding $\acs(D)$, the maximum number (weight, resp.) of arcs in an acyclic subdigraph of $D$, i.e.,
	$\fas(D)+\acs(D)=|A(D)|$	($\fas(D)+\acs(D)=w(D)$, the weight of $A(D)$, resp.). Clearly, exactly one arc of a directed cycle of length 2 is in any feedback arc set. Thus, while studying {\sc Minimum Feedback Arc Set}, it suffices to restrict ourselves to oriented multigraphs or arc-weighted oriented graphs. In this paper, we restrict ourselves to oriented multigraphs.
	
	Studying heuristics for $\acs(D)$ on oriented graphs with maximum degree $\Delta$, Berger and Shor \cite{BS1990,BS1997} proved that $\acs(D)\ge (\frac{1}{2}+\Omega(\frac{1}{\sqrt{\Delta}}))m.$ Using \GG{the probabilistic method}, Alon \cite{Alon2002} improved the bound to $\acs(D)\ge (\frac{1}{2}+\frac{1}{16\sqrt{\Delta}})m$ for oriented multigraphs with maximum degree $\Delta$ (in fact, Alon proved such a bound for arc-weighted oriented graphs). Thus, for an oriented multigraph $D$ with maximum degree $\Delta$, $\fas(D)\le (\frac{1}{2}-\frac{1}{16\sqrt{\Delta}})m$. Jung \cite{Jung1970} and Spencer \cite{Spencer1971,Spencer1980} showed that if $t(n)$ is the maximum of $\fas(T)$ for a tournament $T$ with $n$ vertices, then
	$t(n)=(\frac{1}{2}-\Theta(\frac{1}{\sqrt{n}})){n \choose 2}$.
	It follows from the result of Jung and Spencer that the bounds of Alon, and Berger and Shor are asymptotically tight subject to a positive constant $b$ in $\frac{b}{\sqrt{\Delta}}$.
	
	However, for small values of $\Delta$, the above bounds are far from tight. Berger and Shor \cite{BS1990,BS1997} showed that for an oriented graph $D$ with $\Delta \le $ 2 and $\Delta \le $ 3, $\fas(D)\le m/3$, and for $\Delta\le $ 4 and $\Delta\le $ 5, $\fas(D)\le 11m/30.$ 
	
	Hanauer et al. \cite{HBA2013} and Hanauer \cite{Hanauer2017} improved the above bounds for  $\Delta\leq 3$ and $\Delta\leq 4$ as follows.
	
	\begin{theorem}\label{thm:md34}\cite{Hanauer2017}
		(i) If $D$ is an oriented multigraph with $\Delta\leq 3$, then $\fas(D)\leq n/3$.
		(ii) If $D$ is an oriented multigraph with $\Delta\leq 4$, then $\fas(D)\leq m/3$.
		Both bounds are tight. Furthermore, the bound of (ii) is tight for degree-$4$ oriented multigraphs.
	\end{theorem}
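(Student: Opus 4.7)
The plan is to prove both parts by induction, using $n$ for part (i) and $m$ for part (ii), combined with a short catalogue of local reductions and a case analysis for configurations in which none of the reductions applies. The accounting one wants to maintain is that each arc charged to the feedback arc set is ``paid for'' by three vertices (part (i)) or three arcs (part (ii)) removed from $D$.

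The opening reductions are the same for both parts. If some vertex $v$ has $d^+_D(v)=0$ or $d^-_D(v)=0$, then $v$ lies on no directed cycle, so one deletes $v$ and applies induction on $D-v$; the bounds $(n-1)/3<n/3$ and $m-d_D(v)\le m$ suffice, respectively. If $v$ has $d^+_D(v)=d^-_D(v)=1$ with in-neighbor $u$ and out-neighbor $w$ and $wu\notin A(D)$, then one suppresses $v$ by replacing the path $u\to v\to w$ with a single arc $u\to w$ (or removing $v$ outright if $uw$ is already present); the resulting oriented multigraph $D'$ has $n-1$ vertices and at most $m-1$ arcs, and any feedback arc set of $D'$ lifts to one of the same size in $D$. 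If instead $wu\in A(D)$, then $u,v,w$ span a directed triangle: one deletes all three vertices, charges one arc of the triangle to the feedback set, and applies induction to $D-\{u,v,w\}$; the counts $(n-3)/3$ and $(m-3)/3$ make the bounds go through. A further pass handles multi-arcs and vertices of degree at most $1$.

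After the reductions are exhausted, for part (i) every vertex has degree exactly $3$ with $(d^+,d^-)\in\{(1,2),(2,1)\}$, while for part (ii) a vertex may additionally have degree $4$ with $(d^+,d^-)\in\{(1,3),(2,2),(3,1)\}$. The hard part is a case analysis: in each residual local configuration one must exhibit a short directed cycle---or, more generally, a constant-size subgraph $H$---whose removal, together with charging one or a few of its arcs, maintains the $1/3$ accounting. I expect this case analysis to be the main obstacle: once easy vertex suppressions are unavailable, the residual local structure admits many subcases according to which second-neighbors of a chosen vertex coincide, where multi-arcs occur, and how adjacent triangles overlap; one must verify the charging uniformly across all of them. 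Tightness, by contrast, is straightforward: a disjoint union of directed triangles realises $\fas=n/3=m/3$ for (i), and a suitable $4$-regular gluing of triangles (for instance, doubling each triangle into a $4$-regular blow-up) realises $\fas=m/3$ for degree-$4$ oriented multigraphs in (ii).
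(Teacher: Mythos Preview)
The paper does not prove this theorem: it is quoted from Hanauer's thesis \cite{Hanauer2017} and used as a black box (notably as the base case in the proof of Theorem~\ref{thm:main} when $X_5^- = \emptyset$). So there is no ``paper's own proof'' to compare your attempt against.

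That said, your proposal is not a proof but a plan, and you say so yourself. The reductions you list (delete sources/sinks, suppress in/out-degree-$1$ vertices, kill a triangle through a degree-$2$ vertex) are correct and standard, but once they are exhausted you are left---in part~(i)---with a digraph in which every vertex has degree exactly $3$ and $(d^+,d^-)\in\{(1,2),(2,1)\}$, and you give no mechanism for making further progress. For the bound $n/3$ you must remove three vertices per unit of $\fas$; simply deleting one degree-$3$ vertex costs one unit of $\fas$ but only one vertex, so something more structural (e.g.\ locating a short cycle and arguing about the neighbourhood) is required, and that is precisely the ``hard case analysis'' you flag and then omit. The same gap appears in part~(ii) for degree-$4$ vertices with $(d^+,d^-)=(2,2)$: deleting such a vertex removes four arcs and may cost two in $\fas$, which is too expensive, and you do not say how to handle this. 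Your tightness examples are fine (disjoint triangles for~(i); a triangle with every arc doubled gives a degree-$4$ oriented multigraph with $\fas=2=m/3$ for~(ii)), but the forward direction remains only a sketch.
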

	
	Hanauer  \cite{Hanauer2017} posed the following:
	
	\begin{conjecture}
		If $D$ \LH{is} an oriented multigraph with $\Delta\le 5$, then
		$\fas(D)\le 2.5n/3$.
	\end{conjecture}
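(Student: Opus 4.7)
The plan is to prove the stronger assertion $\fas(D) \le m/3$ for every oriented multigraph $D$ with $\Delta(D) \le 5$; since the degree bound forces $2m \le 5n$, this immediately yields $\fas(D) \le 5n/6 = 2.5n/3$, confirming the conjecture. The proof would proceed by strong induction on $|V(D)|$, with ties broken by $|A(D)|$.

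The workhorse is the standard vertex-insertion bound: for any vertex $v$ with in-degree $a$ and out-degree $b$, an optimal linear order of $D-v$ can be extended by inserting $v$ in a position that creates at most $\min(a,b)$ backward arcs, giving $\fas(D) \le \fas(D-v) + \min(a,b)$. Whenever $\min(a,b) \le d(v)/3$ the inductive step closes, since
\[
\fas(D) \;\le\; \frac{m-d(v)}{3} + \min(a,b) \;\le\; \frac{m-d(v)}{3} + \frac{d(v)}{3} \;=\; \frac{m}{3}.
\]
This handles every vertex of degree at most $3$ and every vertex of degree $4$ or $5$ whose smaller side has at most $1$ arc. The remaining obstruction is the set of \emph{balanced} vertices: degree $4$ with $(a,b)=(2,2)$ and degree $5$ with $\{a,b\}=\{2,3\}$. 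In a minimum counterexample every vertex must be balanced.

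The core of the argument is then a structural analysis of oriented multigraphs in which every vertex is balanced and $\Delta \le 5$, aiming to exhibit a ``cheap'' substructure whose removal meets a ratio strictly better than $1/3$ of arcs per FAS arc. Promising targets are a directed triangle (three arcs removed at a cost of one FAS arc, hitting the ratio exactly), a multi-arc $u\rightrightarrows v$ (which constrains the neighbourhoods of $u$ and $v$ and should permit a joint two-vertex reduction), or a short directed path on balanced vertices admitting a tailored local reordering. For each candidate structure $S$ I would verify a reduction lemma of the form: if $D$ contains $S$, then $D$ has a strictly smaller oriented multigraph $D'$ with $\fas(D) - \fas(D') \le (|A(D)| - |A(D')|)/3$, and then combine these lemmas to contradict minimality.

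The main obstacle I expect is the final combinatorial lemma asserting that at least one exploitable substructure is always present in a minimum counterexample. This is likely to require a discharging argument or a global degree-sum count, exploiting that the five arcs incident with a balanced degree-$5$ vertex cannot all avoid forming a short directed cycle or a repeated endpoint with another balanced vertex. A natural fallback, if discharging cannot be pushed through cleanly, is to try to reduce to Hanauer's Theorem~\ref{thm:md34}(ii) by removing a small arc set $R$ that destroys all degree-$5$ vertices, but this route gives only $\fas(D) \le (m-|R|)/3 + |R| = m/3 + 2|R|/3$, so a structural reduction at the balanced vertices themselves appears unavoidable.
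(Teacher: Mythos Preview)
Your framework is the same as the paper's: prove $\fas(D)\le m/3$ by induction, observe that single-vertex removal closes the induction unless every vertex is ``balanced'' (degree~4 with split $(2,2)$ or degree~5 with split $\{2,3\}$), and then do structural work on balanced digraphs. Two issues, one small and one essential.

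\textbf{Minor slip.} Your criterion $\min(a,b)\le d(v)/3$ does \emph{not} cover a degree-$2$ vertex with $d^+=d^-=1$: there $\min(a,b)=1>2/3$. The paper handles this case separately, either by suppressing the vertex (adding the arc $zy$ when no $3$-cycle through $v$ exists; this is reduction (g2b)) or, if $v$ lies on a $3$-cycle, by deleting the three arcs of that cycle at cost~$1$ (reduction (n2)). This is easy to fix, but as stated your reduction to ``all vertices balanced'' is incomplete.

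\textbf{The real gap.} The structural step you flag as ``the main obstacle'' is indeed the entire content of the theorem, and the tools you propose are not enough. A directed triangle, as you note, only meets the ratio $1/3$ exactly and gains nothing; multi-arcs or short paths do not obviously yield strictly-better-than-$1/3$ reductions either, and there is no discharging argument in sight. What the paper actually does is quite different from searching for a single cheap substructure. It first introduces several \emph{multi-vertex} reductions (called (n55), (ntt), (n545)) whose non-applicability forces global structure: writing $X_5^+$, $X_5^-$ for the degree-$5$ vertices of out-degree $3$ and $2$ respectively, one gets that $X_5^+$ and $X_5^-$ are independent and there is no arc from $X_5^-$ to $X_5^+$. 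The key device is then a \emph{compensating chain}: pick $x\in X_5^-$ and delete it --- this is a slightly bad step ($5$ arcs removed for cost $2$, ratio $2/5>1/3$). The structural facts guarantee that $x$ has an out-neighbour of degree~$4$, hence $D-x$ contains a vertex of degree~$3$. The technical heart (Claim~A in the paper) shows that from any such degree-$3$ vertex one can perform a sequence of exact-ratio (n2)/(n3) reductions that terminates in a strictly good reduction (one removing $3k+1$ arcs at cost $k$). The surplus from that final good reduction cancels the deficit from deleting $x$, and the induction closes. None of this machinery --- the specific multi-vertex reductions, the resulting structure on $X_5^{\pm}$, or the compensating-chain lemma --- appears in your proposal, and without it the balanced case remains open.
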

	
	Note that if $\Delta\le 5$ then $m\le 2.5n$. The following main result of this paper proves the above conjecture in a stronger form. It also improves the bound of Berger and Shor \cite{BS1990} for $\Delta\le $ 4 and $\Delta\le $ 5.
	
	\begin{theorem}\label{thm:main}
		If $D$ is an oriented multigraph with
		$\Delta\le$ 5, then $\fas(D)\le m/3$.
	\end{theorem}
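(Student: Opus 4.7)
The plan is strong induction on the number of arcs $m$. The base case $m = 0$ is trivial. For the inductive step, if $\Delta(D) \le 4$ the bound follows immediately from Theorem~\ref{thm:md34}(ii), and if $D$ is not strongly connected we sum the bound over the strong components. So assume $D$ is strongly connected with a vertex of degree $5$. Several easy reductions apply: any vertex $v$ with $d^-(v) = 0$ or $d^+(v) = 0$ lies on no directed cycle and can be deleted, giving $\fas(D) = \fas(D - v)$; any vertex of degree at least $3$ with $d^-(v) = 1$ (symmetrically $d^+(v) = 1$) can be deleted after placing its unique incoming arc in the FAS, paying one unit of FAS against $d(v) \ge 3$ removed arcs, which by induction yields $\fas(D) \le 1 + (m - d(v))/3 \le m/3$; and degree-$2$ vertices can be contracted after verifying that no $2$-cycle is created. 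Thus we may assume every vertex satisfies $d^-(v), d^+(v) \ge 2$, so its degree lies in $\{4,5\}$ and its type $(d^-,d^+)$ is in $\{(2,2), (2,3), (3,2)\}$.

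The delicate case is a degree-$5$ vertex $v$, say of type $(2,3)$. Naive deletion gives $\fas(D) \le 2 + (m-5)/3 = (m+1)/3$, short of $m/3$ by $1/3$, and the plan is to close this gap via local structure. The cleanest saving comes from adjacent same-type degree-$5$ vertices $u$ and $v$: if both are of type $(2,3)$ with $u \to v$, then the three arcs entering $\{u,v\}$ from outside (the two in-arcs of $u$ and the lone external in-arc of $v$) form a cycle cut, since after removing them $u$ becomes a source and no cycle of $D$ passes through $u$ or $v$. With exactly $9$ arcs incident to $\{u,v\}$, induction yields $\fas(D) \le 3 + (m-9)/3 = m/3$; the symmetric out-cut argument handles adjacent type-$(3,2)$ vertices, and a higher multiplicity of the $uv$-arc only improves the bound.

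The main obstacle is the mixed case, in which every degree-$5$ vertex of type $(2,3)$ is adjacent only to degree-$\le 4$ vertices or to type-$(3,2)$ degree-$5$ vertices (and vice versa). For mixed adjacent degree-$5$ vertices the above cut jumps from $3$ to $4$, one too many. My plan here is to examine the second neighbourhood of $v$: look for a same-type degree-$5$ vertex reachable via a short directed path, a directed triangle through $v$, or a specific local configuration of degree-$4$ neighbours that admits a joint reduction with the correct accounting. An alternative route is to take an acyclic subdigraph $A$ of $D-v$ of size at least $2(m-5)/3$ (granted by induction) and insert $v$ into its topological order at a position capturing at least $4$ of its $5$ incident arcs; this succeeds unless $v$'s in-neighbours and out-neighbours are highly interleaved in $A$, leaving only a bounded number of irreducible configurations to dispatch directly. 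Since the $m/3$ bound is tight, already realised by disjoint oriented triangles, every reduction has to be calibrated exactly, and ensuring that no $2$-cycle is produced anywhere by a contraction or surgery is the main technical care required.
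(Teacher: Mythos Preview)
Your framework matches the paper's: induction on $m$, fall back to Theorem~\ref{thm:md34}(ii) when $\Delta\le 4$, and peel off vertices with $\min(d^+,d^-)\le 1$. Your observation about adjacent same-type degree-$5$ vertices is exactly the paper's reduction~(n55). But the proposal is not a proof: the ``mixed case'' paragraph is a list of possible directions, none carried out, and this is precisely where the whole difficulty sits.

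Concretely, once every vertex has type $(2,2)$, $(2,3)$ or $(3,2)$ and no two same-type degree-$5$ vertices are adjacent, none of your suggested fixes obviously works. The topological-insertion idea fails generically: for a $(2,3)$ vertex $v$, the best slot in any order of $D-v$ guarantees only $\max(d^+,d^-)=3$ forward arcs, not $4$, and there is no reason the in- and out-neighbours of $v$ should be unlaced in an arbitrary optimal ordering of $D-v$. ``Look for a same-type degree-$5$ vertex nearby'' has no a~priori bound on how far you must look, and you give no argument that a bounded search suffices or that the configurations you would find can each be reduced with the right accounting.

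The paper closes this gap with a mechanism you have not discovered. First, two further local reductions are needed to set up the structure: a transitive-triangle reduction~(ntt) on three vertices of degree $\ge 4$ whose out-degrees (or in-degrees) are all $\le 2$, and a reduction~(n545) on a $(2,2)$ vertex with two non-adjacent degree-$5$ neighbours of the same type. With all reductions blocked, one shows that $X_5^-$ (the $(3,2)$ vertices) is nonempty and that every $x\in X_5^-$ has an out-neighbour $y$ of degree $4$. Deleting $x$ costs $2$ against $5$ arcs, a deficit of one arc; but $y$ now has degree $3$ in $D-x$. The key lemma (Claim~A in the paper) is that from any degree-$3$ vertex in a subdigraph of the irreducible $D$ one can perform a finite chain of degree-$3$ deletions (each exact: $1$ against $3$) that terminates in a \emph{good} reduction saving one extra arc ($k$ against $3k+1$), repaying the initial deficit. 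Proving that this chain terminates, and does so in a good reduction, uses the blocked (n545) and (ntt) reductions in an essential way and requires a separate inductive argument; this is the idea your proposal is missing.
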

	
	Since the bound of Theorem \ref{thm:md34} (ii) is tight, the bound in Theorem \ref{thm:main} is also tight. \YZ{Now we sketch the proof of Theorem \ref{thm:main}.
	The main idea of the proof is to use reductions whenever we can to delete at least $3k$ arcs from the input oriented multigraph $D$ for some integer $k\geq 0$ obtaining $D'$ such that $\fas(D)\le \fas(D')+k.$
We will call such reductions {\em nice reductions} (see \LH{reductions} (n0)-(n545) in Section \ref{sec:thm2}). For {\em good reductions} (see \LH{reductions} (g1)-(g5) in Section \ref{sec:thm2})
we can do a bit better: $|A(D')|\le |A(D)|-(3k+1)$ and $\fas(D)\le \fas(D')+k.$ Let $D'$ be an oriented multigraph obtained after performing all possible reductions starting from $D$. If $D'$ is empty or $\Delta(D')\le 4$ then we are done due to Theorem \ref{thm:md34}.  If $\Delta(D')=5$, we show that there is a vertex $x\in V(D')$ of out-degree two and in-degree three such that $\fas(D')\le \fas(D'')+2,$  where $D''$ is obtained from $D'$ by deleting the arcs incident with $x.$ Moreover, on $D''$, one can do a sequence of nice reductions followed by a good reduction. Then, the good reduction will compensate for the starting non-nice reduction. Indeed, assume that on $D''$, we \LH{perform} a sequence of nice reductions that \LH{delete} at least $3r$ arcs and one good reduction that \LH{deletes} at least $3k+1$ arcs. Then, we \LH{delete} at least $5+3r+3k+1=3(k+r+2)$ arcs from $D'$,
but $\fas(D')$ \LH{decreases} by at most $2+r+k$. Thus, we are done as this whole reduction sequence can be seen as one nice reduction.}
	
	\2
	
	Hanauer et al. \cite{HBA2013}  posed the following:
	
	\begin{conjecture}\label{conj2}
		If $D$ \LH{is} a strongly connected oriented graph with $\Delta\le 5$, then
		$\fas(D)\le 2n/3$.
	\end{conjecture}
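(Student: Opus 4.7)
Since $\fas(D)\le m/3$ by Theorem~\ref{thm:main}, the bound $\fas(D)\le 2n/3$ is immediate whenever $m\le 2n$. Because $\Delta\le 5$ forces $m\le 5n/2$, the only non-trivial regime is $2n<m\le 5n/2$, in which many vertices must have degree $5$: writing $2m=\sum_v d_D(v)\le 5n_5+4(n-n_5)$ gives $n_5\ge 2(m-2n)$. So the heart of Conjecture~\ref{conj2} is a (nearly) $5$-regular strongly connected oriented graph, and one needs to shave $n/6$ arcs off the bound $5n/6$ coming from Theorem~\ref{thm:main}.

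The plan is induction on $n$ with an amortised vertex-budget of $2/3$ per vertex, using the standard inequality $\fas(D)\le\fas(D-v)+\min(d^+(v),d^-(v))$ that follows from throwing the smaller side of $v$'s arcs into the fas. If some vertex $v$ satisfies $(d^+(v),d^-(v))=(1,1)$, I would contract the in--out path through $v$ (with small adjustments when the contraction would create a parallel arc or a $2$-cycle, each contributing at most one extra arc to the fas), which removes one vertex at zero $\fas$-cost and produces a budget surplus of $2/3$; if instead $\min(d^+(v),d^-(v))=1$, deleting $v$ costs at most one and produces a budget deficit of $1/3$ that must be recouped by a surplus step elsewhere. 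When $D-v$ is no longer strongly connected, the induction hypothesis can be applied component-wise, since arcs between distinct strong components can be ordered acyclically and contribute nothing to fas, and the constant $2/3$ is additive over components.

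The main obstacle is the extremal case in which $D$ is $5$-regular and every vertex has $(d^+(v),d^-(v))\in\{(2,3),(3,2)\}$: every single-vertex deletion then costs exactly $2$, so the naive one-vertex induction gives only $\fas(D)\le 2n$, and the local reductions of Section~\ref{sec:thm2}, which were tailored to the arc-bound $m/3$, give at best $\fas(D)\le 5n/6$. My intended attack is to refine those reductions into a vertex-weighted form in which each step either lowers $n$ by $3k$ at $\fas$-cost at most $2k$, or produces a vertex with $\min(d^+,d^-)\le 1$ to which the cheap branch above applies; strong connectivity, which forces every vertex to lie on a short cycle, should provide the structural slack needed to pass between these two regimes. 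Verifying that such a refinement actually closes up, and that strong connectivity is preserved or suitably decomposed at each inductive step, is the main technical work, and the base cases should be checked against small tight examples obtained by stitching disjoint short cycles into a minimally strongly connected graph.
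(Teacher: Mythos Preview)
Your proposal tries to \emph{prove} Conjecture~\ref{conj2}, but the paper does not prove it---the paper \emph{disproves} it. Immediately after stating the conjecture, the authors announce a counterexample, given as Corollary~\ref{rem}: they build a $7$-vertex oriented graph $D_7$ with $\Delta(D_7)=5$ and $\fas(D_7)\ge 5=\tfrac{5}{7}\cdot 7>\tfrac{2}{3}\cdot 7$, and then join two copies of $D_7$ by extra arcs to obtain a strongly connected degree-$5$ oriented graph $D_{14}$ on $14$ vertices with $\fas(D_{14})\ge 10>\tfrac{28}{3}=\tfrac{2n}{3}$. So the statement you are trying to establish is false.

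Consequently every version of your inductive scheme must break. You yourself isolate the hard case---$5$-regular with all $(d^+,d^-)\in\{(2,3),(3,2)\}$---and propose to ``refine those reductions into a vertex-weighted form in which each step either lowers $n$ by $3k$ at $\fas$-cost at most $2k$, or produces a vertex with $\min(d^+,d^-)\le 1$''. The counterexample $D_{14}$ sits exactly in this regime and shows that no such refinement exists: any sequence of reductions on $D_{14}$ that stays within the class of oriented multigraphs with $\Delta\le 5$ must accumulate $\fas$-cost at least $10$ while removing only $14$ vertices, violating the $2/3$ budget. Strong connectivity gives you no usable slack here, because $D_{14}$ is strongly connected by construction and its building block $D_7$ already has nine $3$-cycles with each arc in at most two of them, forcing $\fas\ge 5$ on just $7$ vertices.
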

	
	We obtain a counterexample to this conjecture, see 	Corollary \ref{rem} in Section \ref{sec:examples}.
	
	\2
	
		\LH{Additionally}, we will show the following theorem for degree-$5$ oriented multigraphs.
	
	\begin{theorem}\label{thm:5-rg}
		If $D$ is a degree-$5$ oriented multigraph, then $\fas (D)\leq \YZ{24n/29}$.
	\end{theorem}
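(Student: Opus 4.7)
Since $D$ is degree-$5$ we have $m=5n/2$, so Theorem \ref{thm:main} already yields $\fas(D)\le m/3 = 5n/6$; thus it suffices to extract an additional savings of only $5n/6-24n/29 = n/174$ on average. The plan is to refine the reduction framework of Theorem \ref{thm:main} to exploit the much more rigid degree-$5$ setting, where every vertex has (out-degree, in-degree) pattern either $(2,3)$ or $(3,2)$ and the local neighborhoods are tightly constrained.

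I would argue by strong induction on $n$. In the first phase, I would try to apply any of the nice or good reductions (n0)-(n545) and (g1)-(g5) from the proof of Theorem \ref{thm:main}. The subtlety is that these reductions typically leave some residual vertices of degree strictly less than $5$, so the residual graph is not directly a valid input to the inductive hypothesis. To absorb this, I would prove a stronger statement of the form
\begin{equation*}
\fas(D)\le \tfrac{24n}{29}-\sum_{v\in V(D)}\beta\bigl(d^+(v),d^-(v)\bigr),
\end{equation*}
where $\beta$ vanishes on degree-$5$ vertices and is non-negative for smaller degrees, with coefficients calibrated so that every reduction used in Theorem \ref{thm:main} still preserves the stronger inequality. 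If any such reduction applies, the result follows by induction.

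In the second phase, $D$ is a \emph{reduced} degree-$5$ multigraph whose local structure is severely restricted by irreducibility. The key step is to locate a set $S\subseteq V(D)$ for which one can exhibit an explicit acyclic ordering of the arcs of $D[S]$ (together with the arcs between $S$ and $V(D)\setminus S$) such that $\fas(D)\le \fas(D-S)+c(S)$ with $c(S)/|S|\le 24/29$. Since removing a single vertex costs at least $\min(d^+(x),d^-(x))=2$, a ratio of $2$, the set $|S|$ must be reasonably large, on the order of $29$, and it must be chosen so that $D-S$ again fits the strengthened invariant.

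The main obstacle is the neighborhood case analysis that locates such an $S$. Starting from a chosen $x\in V(D)$ of type $(2,3)$, I would inspect the five arcs at $x$, the types of its neighbors, and the additional $4$ arcs at each neighbor, including possible multi-arcs. Irreducibility rules out many local patterns but leaves a finite and still sizable list of configurations; in each configuration one must exhibit a suitable $S$ and an acyclic ordering attaining the $24/29$ ratio. The number $24/29$ itself arises naturally as the best achievable trade-off between the $5$ arcs incident to a vertex and the minimum $2$ arcs that must be paid to break all short cycles through it, matched by the lower-bound construction giving $c\ge 5/7$ in the abstract.
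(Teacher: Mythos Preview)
Your proposal is not a proof but a plan, and the plan has real gaps. First, the potential-function strengthening in Phase~1 is asserted, not established: you give no candidate for $\beta$, and it is not at all clear that one can choose non-negative values on low-degree vertices so that \emph{every} reduction (n0)--(n545), (g1)--(g5) used in Theorem~\ref{thm:main} preserves the strengthened inequality. Second, Phase~2 is the entire content of the theorem and you do not carry it out: you promise a ``finite and still sizable list of configurations'' and an explicit $S$ of size about $29$ with ratio $c(S)/|S|\le 24/29$, but exhibit neither the list nor a single such $S$. Your closing explanation of the constant $24/29$ (``best achievable trade-off between the $5$ arcs incident to a vertex and the minimum $2$ arcs that must be paid'') is not a derivation; nothing in your outline forces this particular number.

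The paper's argument is quite different and avoids induction and potential functions entirely. For each vertex $u$ it defines a small set $Q(u)$ (either $\{u\}$, $N^+[u]$, or $N^-[u]$, of size at most~$3$). Lemma~\ref{lem:5-regular} shows that if $S\subseteq V(D)$ is chosen so that the sets $Q(u)$ for $u\in S$ are pairwise disjoint with no arcs between them, then one can delete $\bigcup_{u\in S}Q(u)$, apply Theorem~\ref{thm:main} to the remainder, and reinsert each $Q(u)$ at a cost that beats the $m/3$ rate by exactly one arc per element of $S$; this gives $\fas(D)\le (|A(D)|-|S|)/3$. The proof of Theorem~\ref{thm:5-rg} then just needs $|S|\ge n/58$, which follows by building an auxiliary graph $H$ on $V(D)$ in which $uv$ is an edge iff $Q(u)$ and $Q(v)$ interact, bounding $\Delta(H)\le 57$ by a direct neighbourhood count, and taking $S$ to be an independent set in $H$. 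The constant then drops out as $(5/2-1/58)/3=24/29$.
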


	\subsection{Coefficients in Upper Bounds}\label{sec:coef}
	
	Let $c_{\leq k}'$ ($c_{\leq k}''$, respectively) be the infimum of all reals such that $\fas(D)\leq c_{\leq k}' \cdot m$ ($\fas(D) \leq c_{\leq k}'' \cdot n$, respectively) holds for oriented multigraphs $D$ with $\Delta\leq k$. And let $c_{k}'$ ($c_{k}''$, respectively) be the infimum of all reals such that $\fas(D)\leq c_{k}' \cdot m$ ($\fas(D) \leq c_{k}'' \cdot n$, respectively) holds for all degree-$k$ oriented multigraphs $D$.
\YZ{The following proposition asserts some relations between the coefficients. 	
	
\begin{proposition}\label{relations}
For every integer $k\geq 2$,
(i) $c''_k=c''_{\leq k}$, and (ii) $c''_k=c'_k \cdot (k/2).$
\end{proposition}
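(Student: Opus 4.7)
The plan is to prove (ii) by a direct calculation using the handshake identity and (i) by a size-doubling construction that turns any oriented multigraph with $\Delta\le k$ into a degree-$k$ oriented multigraph with the same ratio $\fas/n$.

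For (ii), the handshake identity applied to a degree-$k$ oriented multigraph yields $2m=kn$, hence $m=kn/2$. Viewing the two constants as suprema over the same class of degree-$k$ multigraphs, namely $c'_k=\sup_{D}\fas(D)/m$ and $c''_k=\sup_{D}\fas(D)/n$, the pointwise identity $\fas(D)/n=(k/2)\cdot\fas(D)/m$ gives $c''_k=(k/2)\cdot c'_k$ immediately.

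For (i), the inequality $c''_k\le c''_{\le k}$ is trivial because every degree-$k$ multigraph has $\Delta\le k$. For the reverse inequality, it suffices to exhibit, for every oriented multigraph $D$ with $n$ vertices and $\Delta(D)\le k$, a degree-$k$ oriented multigraph $D^*$ on $2n$ vertices satisfying $\fas(D^*)=2\fas(D)$: then $\fas(D)/n=\fas(D^*)/|V(D^*)|\le c''_k$, and taking the supremum over all such $D$ gives $c''_{\le k}\le c''_k$. The construction I would use is the following size-doubling trick. Take two vertex-disjoint copies $D_1,D_2$ of $D$, writing $v_i\in V(D_i)$ for the copy of $v\in V(D)$; then, for every $v\in V(D)$, add $k-d_D(v)$ parallel arcs directed from $v_1$ to $v_2$.

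It remains to verify three properties of $D^*$. First, $|V(D^*)|=2n$ and each $v_i$ has new degree $d_D(v)+(k-d_D(v))=k$, so $D^*$ is degree-$k$. Second, $D^*$ is an oriented multigraph: there are no loops (since $D$ has none and the connecting arcs join distinct vertices), and no directed 2-cycle is created (every newly added arc points from $V(D_1)$ to $V(D_2)$, and no arc of $D^*$ runs in the reverse direction). Third, $\fas(D^*)=2\fas(D)$: because no arc of $D^*$ returns from $V(D_2)$ to $V(D_1)$, every directed cycle of $D^*$ is contained entirely in $D_1$ or entirely in $D_2$, so an optimal feedback arc set of $D^*$ is the disjoint union of optimal feedback arc sets of the two isomorphic copies of $D$. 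I do not foresee any serious obstacle; the only step deserving attention is this last verification, which hinges on the uniform orientation of the connecting arcs from $V(D_1)$ to $V(D_2)$.
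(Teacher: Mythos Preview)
Your proposal is correct and follows essentially the same approach as the paper: both proofs use the identical size-doubling construction (two copies of $D$ with $k-d_D(v)$ parallel arcs from $v_1$ to $v_2$) to pass from $\Delta\le k$ to degree-$k$. The only cosmetic differences are that the paper phrases (i) as a contradiction with an $\epsilon$ while you argue directly via suprema, and that you verify the stronger equality $\fas(D^*)=2\fas(D)$ whereas the paper only uses (and implicitly needs) the inequality $\fas(D^*)\ge 2\fas(D)$.
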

\begin{proof}
(ii) is a trivial observation. To prove (i), observe first that $c''_{\leq k}\geq c''_{k}$. Suppose that $\epsilon=c''_{\leq k}-c''_{k}>0$. Let $D$ be an oriented multigraph with $n$ vertices, $\Delta\leq k$ and $\fas(D)\geq (c''_{\leq k}-\frac{\epsilon}{2})\cdot n$. We can construct a degree-$k$ oriented multigraph $D'$ with $m'$ arcs and $n'$ vertices by taking two copies of $D$ and adding $k-d_D(u)$ arcs with the same direction between two vertices corresponding to every $u\in V(D)$. However, $c''_k\cdot n' \geq\fas(D')\geq (c''_{\leq k}-\frac{\epsilon}{2})\cdot n'=(c''_{k}+\frac{\epsilon}{2})\cdot n'$, a contradiction.
\end{proof}
}
	
	
Due to Proposition \ref{relations}, to determine these four coefficients for an integer $k\geq 2$, we only need to know \AY{$c'_{\leq k}$ and one of the
coefficients $c''_k$, $c''_{\leq k}$ or $c'_k$.} We will consider $c'_{\leq k}$ and $c''_{\leq k}$ to be consistent with the previous papers. Also, these two coefficients give us upper bounds for a wider range of oriented multigraphs and together they could potentially offer a better upper bound, i.e., for every oriented multigraph $D$ with $\Delta\leq k$, $\fas(D)\leq \min(c'_{\leq k}\cdot m, c''_{\leq k}\cdot n)$.
	
	\YZ{Berger  \cite{Berger1997} proved the following bound for an oriented graph $D$ with degrees $d_1,\dots ,d_n$ of vertices.
	\begin{equation}\label{eq1} \fas(D)\le \frac{m}{2} - \frac{\sqrt{6}}{40}\sum^n_{i=1}\sqrt{d_i}.\end{equation}
	By Proposition \ref{relations}(i), for every oriented graph $D$ with $\Delta\le k$, we have
	\begin{equation}\label{eq2} \fas(D)\le \left(\frac{k}{4} - \frac{\sqrt{6}}{40}\sqrt{k}\right)n.\end{equation}
	
	Combining Alon's bound and (\ref{eq2}), we have the following bound improving both Alon's and Berger's bounds for oriented graphs with $\Delta\le k$.
	\begin{equation*} \fas(D)\le \min\left(\left(\frac{k}{4} - \frac{\sqrt{6}}{40}\sqrt{k}\right)n,\left(\frac{1}{2}-\frac{1}{16\sqrt{k}}\right)m\right).
	\end{equation*}

\AY{Note that if $D$ is degree-$k$ then $\left(\frac{k}{4} - \frac{\sqrt{6}}{40}\sqrt{k}\right)n <
 \left(\frac{k}{4}-\frac{\sqrt{k}}{32}\right)n
= \left(\frac{1}{2}-\frac{1}{16\sqrt{k}}\right)m$. However, if $m$ is sufficiently smaller than $kn/2$ then $\left(\frac{1}{2}-\frac{1}{16\sqrt{k}}\right)m<\left(\frac{k}{4} - \frac{\sqrt{6}}{40}\sqrt{k}\right)n$.}

	\begin{table}[t]\label{table:1}
		\centering
		\caption{The value or range of $c'_{\leq k}$ and $c''_{\leq k}$ when $k\in [2,6]$}
		\begin{tabular}{|c||c|c|c|c|c|c|c|} \hline
			&  $k=2$  &  $k=3$  &  $k=4$  &  $k=5$  &  $k=6$           \\ \hline \hline
			{\small $c_{ \leq k}'$} &  $1/3$  &  $1/3$  &  $1/3$  &  $1/3$  &  $\geq 25/72$   \\ \hline
			{\small $c_{ \leq k}''$} &  $1/3$  &  $1/3$  &  $2/3$  &  $\YZ{\in} [\frac{5}{7}, \YZ{\frac{24}{29}}]$  & $\geq 75/72$\\ \hline
		\end{tabular}
	\end{table}
}

\2	
	Note that when $k=2$, we have $c'_{\leq 2}=c''_{\leq 2}=1/3$ as in this case, the underlying graph of $D$ is a collection of paths and cycles and therefore $\fas(D)$ is equal to the number of (directed) cycles in $D$, which is at most $n/3$. And $\fas(D)=n/3=m/3$ when $D$ consist of vertex-disjoint $3$-cycles. This fact together Theorems \ref{thm:md34}, \ref{thm:main} and \ref{thm:5-rg} and two examples in Section \ref{sec:examples}, gives Table 1, where $c''_{\leq 4}=2/3$ as $c'_{4}=1/3$ (since the 1/3 bound is tight for degree-$4$ oriented multigraphs and $c''_{\leq 4}=c''_{4}=c'_{4}\cdot 2$\AY{).}

	Note that, in the table, the lower bounds for $c_{ \leq 5}''$, $c_{ \leq 6}'$ and $c_{ \leq 6}''$ are proved for oriented graphs, which may be of interest for research restricted in oriented graphs.
	Also, note that the lower bound for $c'_{\leq 6}$ implies that $5$ is the highest integer $k$ for which $c'_{\leq k}=1/3$. Since we have been unable to determine the exact value of $c''_{\leq 5}$ and \GG{since} $c''_{\leq 5}=c''_{5}$, we \GG{pose} the following:
	
	\begin{problem} Determine $c''_{5}$.
	\end{problem}

	\subsection{Additional Notation and Terminology}
	Let $D=(V(D),A(D))$ be an oriented multigraph and $v$ a vertex in $D$. For a vertex $v\in V (D)$, we denote by $d^+_D(v)$ and $d^-_D(v)$ the {\em out-degree} and {\em in-degree} of $v$, respectively (which is the number of arcs leaving and entering $v$, respectively). Thus, $d_D(v)=d^+_D(v)+d^-_D(v).$
	By a {\em cycle} we mean a directed cycle. A cycle of length $k$ is a {\em $k$-cycle}. 
	We denote by $N^+_D(v)$ ($N^-_D(v)$, respectively) the set of  out-neighbours (in-neighbours, respectively) of $D$, i.e., $N^+_D(v)=\{u\in V(D): vu\in A(D)\}$ ($N^-_D(v)=\{u\in V(D): uv\in A(D)\}$, respectively). We use $N^+[v]$ ($N^-[v]$) to denote the the closed out-neighbourhood (in-neighbourhood) of $v$, i.e., $N^+[v]=N^+(v)\cup \{v\}$ ($N^-[v]=N^-(v)\cup \{v\}$). Note that $|N^+_D(v)|\le d_D^+(v)$ and $|N^-_D(v)|\le d_D^-(v)$ for a vertex $v$ in $D$. \LH{The oriented multigraph obtained by deleting a vertex (or arc) set $X$  to $D$ is denoted by $D-X$.}
The complementarity of $\fas(D)$ and $\acs(D)$ can be clearly viewed by ordering the vertices of $D$, $v_1,v_2,\dots ,v_n$, and classifying every arc $v_iv_j$ of $D$ as  a {\em forward arc} if $i<j$ or a {\em backward arc} if $i>j$. We can view the backward arcs as a feedback arc set and the forward arcs as the arcs of an acyclic subdigraph.
	
	\2
	
	This paper is organized as follows. 
	We prove Theorem \ref{thm:main} in the next section and Theorem \ref{thm:5-rg} in Section \ref{sec3}. In Section \ref{sec:examples}, we obtain lower bounds for
	$c''_{\leq 5}$, $c'_{\leq 6}$ and $c''_{\leq 6}$ and disprove Conjecture \ref{conj2}.

	\section{Proof of Theorem \ref{thm:main}}\label{sec:thm2}

	
	Let $D$ be an oriented multigraph with $\Delta(D)\leq 5$ \LH{and $x$ a vertex in $D$}.	Consider the following possible reductions, reducing $D$ to $D'$ \LH{and are referred to as {\em good} reductions.}
	
	\begin{description}
		\item[(g1):] $d_D(x)=1$. In this case let $D'=D-x$.
		
		\item[(g2a):] $d_D(x)=2$ and $d^+_D(x) \in \{0,2\}$. In this case let $D'=D-x$.
		
		\item[(g2b):] $d_D(x)=2$ and $d^+_D(x)=1$ and $x$ does not belong to a $3$-cycle in $D$.
		
		Let $N^+_D(x)=\{y\}$ and let $N^-_D(x)=\{z\}$. Now let $D'$ be obtained from $D$ by deleting $x$ and adding the arc $zy$.
		As $x$ does not belong to a $3$-cycle we note that $zy$ does not belong to a $2$-cycle in $D'$.
		
		\item[(g3a):] $d_D(x)=3$ and $d^+_D(x) \in \{0,3\}$. In this case let $D'=D-x$.
		
		\item[(g3b):] $d_D(x)=3$ and $d^+_D(x) \in \{1,2\}$ and $x$ is incident with 2 parallel arcs.
		
		Let $N^+_D(x)=\{y\}$ and let $N^-_D(x)=\{z\}$ (where either there are two parallel arcs from $z$ to $x$ or from $x$ to $y$).
		If $yz \in A(D)$, then let $D'$ be obtained from $D-x$ by removing \YZ{ one arc from $y$ to $z$}.
		If $yz \not\in A(D)$, then let $D'$ be obtained from $D-x$ by adding the arc $zy$.
		
		\item[(g4):] $d_D(x)=4$ and $d^+_D(x) \in \{0,1,3,4\}$.  In this case let $D'=D-x$.
		
		\item[(g5):] $d_D(x)=5$ and $d^+_D(x) \in \{0,1,4,5\}$.  In this case let $D'=D-x$.
	\end{description}
	
	 We also consider the following reductions, \LH{referred to as {\em nice} reductions.}
	
	\begin{description}
		\item[(n0):] $d_D(x)=0$. In this case let $D'=D-x$.
		
		\item[(n2):] $d_D(x)=2$ and $x$ belongs to a $3$-cycle in $D$. In this case let $C$ be the $3$-cycle containing $x$
		and let $D'$ be obtained from $D$ by deleting the arcs of $C$ (we may also delete $x$ as it is now an isolated vertex).
		
		\item[(n3):] $d_D(x)=3$ and $d^+_D(x) \in \{1,2\}$ and $x$ is not incident with parallel arcs.
		In this case let $D'=D-x$.
		
		\item[(n55):] $xy \in A(D)$, $d_D(x)=d_D(y)=5$ and either $d^+_D(x) = 2$ or $d^+_D(y)=3$ (or both).
		In this case let $D'=D-\LH{\{x,y\}}$.
		
		\item[(ntt):] \YZ{$\min\{d_D(x), d_D(y), d_D(z)\}\geq 4$}, $\{x,y,z\}$ forms a transitive-triangle in $D$ (i.e $xy,xz,yz \in A(D)$) and either $\max\{d^+_D(x),d^+_D(y),d^+_D(z)\} \leq 2$ or \\
		$\max\{d^-_D(x),d^-_D(y),d^-_D(z)\} \leq 2$. In this case let $D'=D-\LH{\{x,y,z\}}$.
		
		\item[(n545):] A vertex, $x$, with $d^+_D(x)=d^-_D(x)=2$ is adjacent to two non-adjacent degree-5 vertices \LH{$y$ and $z$} in $D$,
		such that $d^+_D(y)=d^+_D(z)$ and $d^+_D(y) \in \{2,3\}$.  In this case let $D'=D-\LH{\{x,y,z\}}$.
	\end{description}
	
	\begin{lemma} \label{lemA}
		If we perform a good reduction on $D$, resulting in a digraph $D'$, then the following \LH{hold}.
		\begin{description}
			\item[(A):] $|A(D')| \leq |A(D)| - (3k+1)$ and $\fas(D) \leq \fas(D') + k$ for some integer $k$.
			\item[(B):] $\Delta(D') \leq 5$ and $D'$ contains no $2$-cycles.
		\end{description}
	\end{lemma}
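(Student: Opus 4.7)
The plan is to verify parts (A) and (B) by a case analysis over the seven good reductions. For each reduction I exhibit a nonnegative integer $k$ for which $|A(D')| \le |A(D)| - (3k+1)$ and $\fas(D) \le \fas(D') + k$. The reductions partition naturally into a $k=0$ family, consisting of (g1), (g2a), (g2b), (g3a), and the subcase of (g3b) with $yz \notin A(D)$, and a $k=1$ family, consisting of (g4), (g5), and the subcase of (g3b) with $yz \in A(D)$. Part (B) is essentially by inspection: pure deletion cannot create a 2-cycle or raise any degree, and in the two reductions that introduce a new arc $zy$ (namely (g2b) and the $yz \notin A(D)$ subcase of (g3b)), the hypothesis $yz \notin A(D)$ prevents a new 2-cycle, while the degrees at $y$ and $z$ cannot grow because each of them simultaneously loses an arc incident with $x$.

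For the $k=0$ reductions, $|A(D')| \le |A(D)| - 1$ is immediate, and $\fas(D) \le \fas(D')$ follows because either $x$ lies on no cycle of $D$ (as an isolated vertex, sink, or source) or every cycle through $x$ in $D$ corresponds to a cycle through the replacement arc $zy$ in $D'$. In the latter case, a minimum feedback arc set $F'$ of $D'$ either contains $zy$, which can be swapped with $xy$ (or with $zx$ when $d^+_D(x)=2$) to give an FAS of $D$ of the same size, or else already hits every $z$-to-$y$ path in $D' - zy$ and hence kills all cycles through $x$ in $D$ directly. For the $k=1$ reductions (g4) and (g5), $|A(D')| \le |A(D)| - 4$ is immediate, and $\fas(D) \le \fas(D') + 1$ follows from the standard dichotomy: either $d^+_D(x) \in \{0, d_D(x)\}$ and $x$ is a source or sink carrying no cycles, or $d^+_D(x) \in \{1, d_D(x) - 1\}$ and $x$ has a unique non-majority incident arc whose addition to any FAS of $D'$ destroys every cycle through $x$.

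The principal obstacle is the subcase of (g3b) with $yz \in A(D)$, where $D' = D - x - yz$ loses four arcs and yet we must show $\fas(D) \le \fas(D') + 1$. My plan is to pass to the complementary quantity $\acs(D) = |A(D)| - \fas(D)$ and prove $\acs(D) \ge \acs(D') + 3$, which combined with $|A(D)| = |A(D')| + 4$ immediately yields the FAS bound. Starting from a maximum acyclic subdigraph $A'$ of $D'$, I add $x$ to it as an isolated vertex and argue that three of the four arcs in $A(D) \setminus A(D')$ can always be adjoined while preserving acyclicity. Since $A'$ is acyclic, it cannot simultaneously contain a directed $y$-to-$z$ path and a directed $z$-to-$y$ path. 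If no $y$-to-$z$ path exists in $A'$, the three arcs incident with $x$ (both $zx$ arcs together with $xy$, or both $xy$ arcs together with $zx$) can be adjoined safely, since any new cycle would require a $y$-to-$z$ path through $A'$. If no $z$-to-$y$ path exists, one instead adjoins $yz$ together with the two parallel arcs at $x$, and any potential new cycle would require a $z$-to-$y$ path in $A'$. Either way, three arcs are added, giving $\acs(D) \ge \acs(D') + 3$ as required.
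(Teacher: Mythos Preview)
Your proof is correct and follows the same case-by-case structure as the paper's, with the same choice of $k$ for each reduction. The main difference is one of language: the paper works throughout with vertex orderings (placing $x$ at the front or end of an optimal ordering of $D'$, or, in (g3b), between $z$ and $y$), whereas you argue directly with feedback arc sets (adding the unique minority arc at $x$ to an FAS of $D'$) and, for the $yz \in A(D)$ subcase of (g3b), pass to the complementary quantity $\acs$. Your $\acs$-based treatment of that subcase---adjoining either all three $x$-arcs or the arc $yz$ together with the two parallel $x$-arcs, depending on which of a $y$-to-$z$ or $z$-to-$y$ path is absent in the acyclic subdigraph---is a clean alternative to the paper's ordering argument and avoids splitting on the relative position of $y$ and $z$.

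One small slip to fix: in your $k=0$ discussion, when $zy \notin F'$ you write that $F'$ ``hits every $z$-to-$y$ path in $D'-zy$''. The direction is reversed. Since $D'-F'$ is acyclic and contains $zy$, it cannot contain any $y$-to-$z$ path; hence $F'$ hits every $y$-to-$z$ path in $D'-zy = D-x$, and that is precisely what is needed to kill cycles through $x$ in $D$ (which use $z \to x \to y$ followed by a $y$-to-$z$ path). The conclusion is unaffected; only the stated direction needs correcting.
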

	
	\begin{proof}
		We will consider each reduction in turn.

		\2
		
		{\bf Reduction~(g1):} $d_D(x)=1$ for some $x \in V(D)$ and $D'=D-x$. Clearly $\fas(D)=\fas(D')$ and $|A(D')| = |A(D)| - 1$, so (A) holds. As $D'$ is a subdigraph of $D$ we note that (B) also holds.
		
		\2
		
		{\bf Reduction~(g2a):} $d_D(x)=2$ and $d^+_D(x) \in \{0,2\}$ for some $x \in V(D)$ and $D'=D-x$.  Clearly $\fas(D)=\fas(D')$ and
		$|A(D')| = |A(D)| - 2$, so (A) holds. As $D'$ is a subdigraph of $D$ we note that (B) also holds.
		
		\2

		{\bf Reduction~(g2b):} $d_D(x)=2$ and $d^+_D(x)=1$ for some $x \in V(D)$ and $x$ does not belong to a $3$-cycle on $D$.
		Now $N^+_D(x)=\{y\}$ and $N^-_D(x)=\{z\}$ and $D'$ is obtained from $D$ by deleting $x$ and adding the arc $zy$.
		
		Let $x_1,x_2,\ldots,x_{n-1}$ be an ordering of $D'$ with $\fas(D')$ backward arcs
		and let $z = x_a$ and let $y=x_b$. If $a>b$ then $zy$ is a backward arc and the ordering $x, x_1,x_2,\ldots,x_{n-1}$ implies that $\fas(D) \leq \fas(D')$ in this case. If $b>a$ then  $zy$ is a forward arc and the ordering
		$ x_1,x_2,\ldots,x_{a}, x , x_{a+1}, x_{a+2}, \ldots, x_{n-1}$ again implies that $\fas(D) \leq \fas(D')$ in this case.
		So in all cases $\fas(D) \leq \fas(D')$ and $|A(D')| = |A(D)| - 1$ (we remove 2 arcs and add 1 arc).
		
		Furthermore, as  $x$ does not belong to a $3$-cycle on $D$ we note that $D'$ contains no $2$-cycles and as no out-degree or in-degree
		is larger in $D'$ than in $D$ we note that $\Delta(D') \leq \Delta(D) \leq 5$. So both (A) and (B) hold.

		\2
		
		{\bf Reduction~(g3a):}  $d_D(x)=3$ and $d^+_D(x) \in \{0,3\}$ for some $x \in V(D)$ and $D'=D-x$. Clearly, $\fas(D)=\fas(D')$ and $|A(D')| = |A(D)| - 3$, so (A) holds. As $D'$ is a subdigraph of $D$ we note that (B) also holds.

		\2
		
		{\bf Reduction~(g3b):} $d_D(x)=3$ and $d^+_D(x) \in \{1,2\}$ for some $x \in V(D)$ and $x$ is incident with 2 parallel arcs.
		Let $N^+_D(x)=\{y\}$ and let $N^-_D(x)=\{z\}$ (where either there are two parallel arcs from $z$ to $x$ or from $x$ to $y$.
		If $yz \in A(D)$, then $D'$ is obtained from $D-x$ by removing \YZ{one arc from $y$ to $z$}.
		If $yz \not\in A(D)$, then $D'$ is obtained from $D-x$ by adding the arc $zy$.
		
		We first consider the case when $yz \in A(D)$ and note that $|A(D')| = |A(D)| - 4$.
		Assume we have an ordering of $V(D')$ with $\fas(D')$ backward arcs.
		
		If $y$ comes before $z$ in the ordering, then place $x$ either at the front (if there are parallel arcs from $x$ to $y$) or
		at the end (if there are parallel arcs from $z$ to $x$) of the ordering. This increases the number of backward arcs by at most one, so
		$\fas(D) \leq \fas(D')+1$ in this case.
		
		If $z$ comes before $y$ in the ordering, then place $x$ anywhere between $y$ and $z$ and note that $yz$ is the only backward arc
		added, so  $\fas(D) \leq \fas(D')+1$ in this case. As $\fas(D) \leq \fas(D')+1$ in all cases we note that (A) holds in this case.
		
		We now consider the case when $yz \not\in A(D)$ and note that $|A(D')| = |A(D)| - 2$ (as we delete 3 arcs and add the arc $zy$).
		If $zy$ is a forward arc in an optimal ordering of $D'$ then we add $x$ in between $z$ and $y$ and note that $\fas(D) \leq \fas(D')$.
		If $zy$ is a backward arc in an optimal ordering of $D'$ then again adding $x$ either at the front (if there are parallel arcs from $x$ to $y$) or
		at the end (if there are parallel arcs from $z$ to $x$) of the ordering shows that $\fas(D) \leq \fas(D')$ (as we remove the backward arc $zy$ but
		add a new backward arc). So again in all cases (A) holds.
		
		It is not difficult to see that (B) also holds.

		\2
		
		{\bf Reduction~(g4):} $d_D(x)=4$ and $d^+_D(x) \in \{0,1,3,4\}$ for some $x \in V(D)$ and $D'=D-x$.
		As we can add $x$ to the front or end of any ordering of $V(D')$ and increase the number of backward arcs by at most one
		we note that $\fas(D) \leq \fas(D')+1$. As $|A(D')| = |A(D)| - 4$, we note that
		(A) holds with $k=1$. As $D'$ is a subdigraph of $D$ we note that (B) also holds.

		\2
		
		{\bf Reduction~(g5):}  $d_D(x)=5$ and $d^+_D(x) \in \{0,1,4,5\}$ for some $x \in V(D)$ and $D'=D-x$.
		As we can add $x$ to the front or end of any ordering of $V(D')$ and increase the number of backward arcs by at most one
		we note that $\fas(D) \leq \fas(D')+1$. \LH{Since $|A(D')| = |A(D)| - 5$, 
		(A) holds with $k=1$}. As $D'$ is a subdigraph of $D$ we note that (B) also holds.
	\end{proof}
	
	\begin{lemma} \label{lemB}
		If we perform a \LH{nice} reduction
, resulting in a digraph $D'$, then the following \LH{hold}.
		\begin{description}
			\item[(A):] $|A(D')| \leq |A(D)| - 3k$ and $\fas(D) \leq \fas(D') + k$ for some integer $k$.
			\item[(B):] $\Delta(D') \leq 5$ and $D'$ contains no $2$-cycles.
			\item[(C):] $D'$ is a subdigraph of $D$.
		\end{description}
	\end{lemma}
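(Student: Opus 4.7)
The plan is to verify conditions (A), (B), (C) one by one for each of the six nice reductions (n0), (n2), (n3), (n55), (ntt), (n545). Conditions (C) and (B) fall out essentially for free: each nice reduction only removes vertices and/or arcs of $D$ and adds nothing, so $D'$ is a subdigraph of $D$ (giving (C)), whence $\Delta(D') \le \Delta(D) \le 5$ and $D'$ inherits the absence of $2$-cycles from $D$ (giving (B)). The substantive work is therefore confined to (A).

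For (A), in every case I would start from an optimal ordering $\sigma$ of $V(D')$, reinsert the deleted vertex (or vertices) into $\sigma$ at a carefully chosen position, and bound the number of backward arcs thereby created by some integer $k$ with $3k \le |A(D)|-|A(D')|$. The easy cases are (n0) (take $k=0$) and (n2), (n3) (take $k=1$, three arcs removed each). In (n2), some placement of $x$ in $\sigma$ makes exactly one of the three triangle arcs backward, exploiting the fact that every $3$-cycle contributes at least one backward arc to every ordering. In (n3), the three neighbours of $x$ are distinct (no parallel arcs), so a short case analysis on their relative positions in $\sigma$ identifies an insertion point for $x$ producing at most one backward arc.

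The more delicate cases are (n55), (ntt), and (n545). For (ntt), I would assume WLOG that $\max\{d^+_D(x),d^+_D(y),d^+_D(z)\} \le 2$ and place the triangle at the tail of $\sigma$ in the order $\ldots, x, y, z$; then $xy, xz, yz$ and all arcs from $V(D')$ into $\{x,y,z\}$ are forward, while the backward arcs are precisely those from $\{x,y,z\}$ to $V(D')$, numbering at most $d^+_D(x)+d^+_D(y)+d^+_D(z)-3 \le 3$, against at least $d_D(x)+d_D(y)+d_D(z)-3 \ge 9$ removed arcs, so $k=3$ suffices. For (n55) I would place $x$ just before $y$ at one end of $\sigma$ (or, in the parallel-arc-heavy sub-case, $y$ at the front and $x$ at the back), using either $d^+_D(x)=2$ or $d^+_D(y)=3$ to bound the surviving out-arcs of $x$ and $y$ to $V(D')$. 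Case (n545) is analogous, using the joint condition $d^+_D(y)=d^+_D(z)\in\{2,3\}$ together with the balanced $d^+_D(x)=d^-_D(x)=2$ to position $y, x, z$ (or a suitable permutation) appropriately.

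The main obstacle will be the bookkeeping in (n55), (ntt), and (n545): the multiplicities of the parallel arcs between the deleted vertices affect both $|A(D)|-|A(D')|$ and the number of backward arcs created by reinsertion, so each reduction must be split into sub-cases according to these multiplicities, and for each sub-case one must verify the inequality $3k \le |A(D)|-|A(D')|$ simultaneously with $\fas(D) \le \fas(D')+k$ using a placement strategy tuned to that sub-case.
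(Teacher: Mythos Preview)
Your plan matches the paper's proof closely: (B) and (C) follow from the subdigraph observation, and (A) is handled reduction by reduction via reinsertion into an optimal ordering of $D'$.

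One correction for (n55): the split placement ($y$ at the front, $x$ at the back) is needed not for a parallel-arc sub-case but for the degree sub-case $d^+_D(x)=2$, $d^+_D(y)=3$ (so $d^-_D(y)=2$). Placing $x,y$ consecutively at either end in that sub-case creates up to four backward arcs against only nine removed, which would violate (A). The paper's case split for (n55) is by $(d^+_D(x),d^+_D(y))\in\{(3,3),(2,2),(2,3)\}$, using the ordering $x,y,\sigma_{D'}$ for the first, $\sigma_{D'},x,y$ for the second, and $y,\sigma_{D'},x$ for the third; in each case at most three backward arcs arise. Parallel arcs from $x$ to $y$ only help within each sub-case, dropping the backward count to at most two while at least eight arcs are still removed.

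A minor simplification for (n3): you can skip the case analysis on the neighbours' positions entirely. Since $d^+_D(x)\in\{1,2\}$, placing $x$ at the front of $\sigma$ when $d^+_D(x)=2$ (respectively at the back when $d^+_D(x)=1$) already yields at most one new backward arc.
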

	
	\begin{proof}
		We first note that all reductions just remove vertices (and the arcs incident with these vertices). So clearly (B) \LH{and (C) hold}
		for all reductions. We will consider each reduction in turn and prove part (A).
		
		\2
		
		{\bf Reduction~(n0):}  $d_D(x)=0$ for some $x \in V(D)$ and $D'=D-x$.   Clearly $\fas(D)=\fas(D')$ and $|A(D')| = |A(D)|$, so
		(A) holds.
		
		\2
		
		{\bf Reduction~(n2):} $d_D(x)=2$ for some $x \in V(D)$ and $x$ belongs to a $3$-cycle, $C$, in $D$ and let $D'$ be obtained from
		$D$ by deleting the arcs of $C$. Assume that $N^+_D(x)=\{y\}$ and $N^-_D(x)=\{z\}$. \YZ{Let $v_1,v_2,\dots, v_n$ be the ordering of $V(D)$ with $\fas(D')$ backward arcs in $D'$, and $v_i=y$ and $v_j=z$. If $i>j$, then we obtain an ordering of $V(D)$ by moving $x$ to anywhere between $y$ and $z$. If $i<j$,} then we move $x$ to the front of the ordering. Note that in each case we obtain an ordering of $V(D)$ with at most $\fas(D')+1$ backward arcs in $D$ and therefore $\fas(D)\leq \fas(D')+1$. And we have $|A(D')|\leq |A(D)|-3$, so (A) holds.

		\2
		
		{\bf Reduction~(n3):}  $d_D(x)=3$ for some $x \in V(D)$ and $d^+_D(x) \in \{1,2\}$ and $x$ is not incident with parallel arcs
		and $D'=D-x$. Assume that $v_1,v_2,\dots v_{n-1}$ is an ordering of $V(D')$ with $\fas(D')$ backward arcs in $D'$. By adding $x$ to the front (if $d^+_D(x)=2$) or to the end (if $d^+_D(x)=1$) of the ordering we have an ordering with at most $\fas(D')+1$ backward arcs in $D$ and therefore $\fas(D)\leq \fas(D')+1$. And as $|A(D)|\leq |A(D')|-3$, (A) holds.
		
		\2
		
		{\bf Reduction~(n55):}  $xy \in A(D)$, $d_D(x)=d_D(y)=5$ and either $d^+_D(x) = 2$ or $d^+_D(y)=3$ (or both) and $D'=D-\LH{\{x,y\}}$.
		Let $u_1,u_2,\ldots,u_{n-2}$ be an ordering of $V(D')$ with $\fas(D')$ backward arcs in $D'$. If $d^-_D(x)=d^-_D(y)=2$ then $x,y, u_1,u_2,\ldots,u_{n-2}$ is an ordering of $D$ with at most $\fas(D')+3$ backward arcs (we may
		add two backward arcs into $x$ and 1 into $y$). Analogously, if
		$d^+_D(x)=d^+_D(y)=2$ then $u_1,u_2,\ldots,u_{n-2},x,y$ is an ordering of $D$ with at most $\fas(D')+3$ backward arcs.
		Finally if $d^+_D(x) = 2$ and $d^+_D(y)=3$ (which implies that $d^-_D(y)=2$) then
		$y,u_1,u_2,\ldots,u_{n-2},x$ is an ordering of $D$ with at most $\fas(D')+3$ backward arcs (the arc $xy$ and one additional arc into $y$ and
		one additional arc out of $x$). So in all cases $\fas(D) \leq \fas(D')+3$.
		
		So if there are no parallel arcs from $x$ to $y$ then $|A(D')| \leq |A(D)| - 9$ and (A) holds.
		And if there are parallel arcs from $x$ to $y$ then the above orderings all show that $\fas(D) \leq \fas(D')+2$
		and we have $|A(D')| \leq |A(D)| - 8$ and (A) again holds.

		\2
		
		{\bf Reduction~(ntt):}  \YZ{$\min\{d_D(x), d_D(y), d_D(z)\}\geq4$,  $\{x,y,z\}$ forms a transitive-triangle in $D$ (i.e $xy,xz,yz \in A(D)$) and either $\max\{d^+_D(x),d^+_D(y),d^+_D(z)\} \leq 2$ or
			$\max\{d^-_D(x),d^-_D(y),d^-_D(z)\} \leq 2$} and $D'=D-\LH{\{x,y,z\}}$. Without loss of generality assume that $\max\{d^+_D(x),d^+_D(y),d^+_D(z)\} \leq 2$ and that
		$u_1,u_2,\ldots,u_{n-3}$ is an ordering of $V(D')$ with $\fas(D')$ backward arcs in $D'$. Consider the ordering $u_1,u_2,\ldots,u_{n-3}, x,y,z$ of $D$. We add at most 2 backward arcs out of $z$ and at most $1$ backward arc
		out of $y$ and no backward arc out of $x$. So, if there are no parallel arcs between vertices in  $\{x,y,z\}$ then we note that
		$\fas(D)\leq\fas(D') + 3$ and $|A(D')| \leq |A(D)| - 9$, so (A) holds.
		
		If there are parallel arcs connecting vertices of $\{x,y,z\}$, \LH{then the parallel arcs must go from $y$ to $z$ as $d^+_D(x) \leq 2$}.
		And as $d^+_D(y) \leq 2$ there can be at most 2 parallel arcs from $y$ to $z$. \LH{As stated above}, we now have
		$\fas(D)\leq \fas(D') + 2$ (as no backward arcs will leave $y$)  and $|A(D')| \leq |A(D)| - 8$, so (A) again holds.

		\2
		
		{\bf Reduction~(n545):}  A vertex, $x$, with $d^+_D(x)=d^-_D(x)=2$ is adjacent to two non-adjacent  degree-5 vertices \LH{$y$ and $z$} in $D$,
		such that $d^+_D(y)=d^+_D(z)$ and $d^+_D(y) \in \{2,3\}$ and $D'=D-\LH{\{x,y,z\}}$.
		Let $\sigma_{D'}$ be an ordering of $V(D')$ with $\fas(D')$ backward arcs in $D'$.
		
		We only consider the case when  $d^+_D(y)=d^+_D(z)=2$ as the case when $d^-_D(y)=d^-_D(z)=2$ can be proved analogously. We place an acyclic ordering (with no backward arcs) of $x,y,z$ at the end of the ordering $\sigma_{D'}$
		and note that there are at most 4 arcs from $\{x,y,z\}$ to $V(D')$ in $D$. So $\fas(D) \leq \fas(D') + 4$. In addition, \LH{if} there are no parallel arcs connecting vertices within $\{x,y,z\}$ then $|A(D')| = |A(D)| - 12$ and so (A) holds.
		And if there are parallel arcs within $\{x,y,z\}$ then $|A(D')| \leq |A(D)| - 10$ and $\fas(D) \leq \fas(D') + 3$ so again (A) holds.
	\end{proof}
	
	Now we are ready to prove Theorem \ref{thm:main}. For convenience of the reader, let us formulate it first.
	
	\vspace{1mm}
	
		\noindent {\bf Theorem \ref{thm:main}.}
		{\em		If $D$ is an oriented multigraph with $\Delta\leq 5$, then $\fas(D) \leq m /3$.}
	
	\begin{proof}
		We will prove the theorem by induction on $|A(D)|$. If $|A(D)| \leq 2$, then $\fas(D)=0$ and the theorem holds.
		Now assume that $|A(D)| \geq 3$ and the theorem holds for all digraphs of smaller size.
		If we can perform any of \LH{the good and nice} reductions in order to obtain $D'$, then the following holds for some integer $k$, by Lemma~\ref{lemA} and
		Lemma~\ref{lemB}.
		\[
		\fas(D) \leq \fas(D') + k \leq \frac{|A(D')|}{3} + k \leq \frac{|A(D)|-3k}{3} + k = \frac{|A(D)|}{3}.
		\]
		
		So, we may assume that none of \LH{the good and nice reductions} can be performed on $D$. This implies that $4 \leq d(x) \leq 5$ and
		$d^+(x), d^-(x) \in \{2,3\}$ for all $x \in V(D)$.
		Let $X_5^+$ contain all vertices with degree five and out-degree three and
		let $X_5^-$ contain all vertices with degree five and out-degree two and
		let $X_4$ contain all vertices with degree four \YZ{and out-degree two} in $D$.
		Note that $(X_5^+,X_4,X_5^-)$ is a partition of $V(D)$.
		Furthermore $X_5^-$ and $X_5^+$ are both independent sets and there are no arcs from $X_5^-$ to $X_5^+$ as otherwise we could use reduction (n55).
		We will now prove the following claim.
		
		\2
		
		{\bf Claim A:} {\em If $D'$ is a subdigraph of $D$ with a vertex of degree three then we can perform a sequence of \LH{(n2) or
			(n3) reductions} on $D'$, resulting in a digraph $D^*$, such that we can perform a good reduction on $D^*$.}
		
		\2
		
		{\bf Proof of Claim A:} We will prove the claim by induction on the size of $D'$. If $|A(D')| \leq 3$, then
		$|A(D')| = 3$ as $D'$ contains a vertex of degree 3. \YZ{Let $D^*=D'$} and we can perform reduction~(g1), (g2a) or (g3a) on $D^*$ respectively if $D^*$ has exactly one, two or three parallel arcs. So we may assume that \AY{$|A(D')| \geq 4$} and the claim holds for all smaller subdigraphs of $D$.
		
		Let $q$ be a vertex of degree three in $D'$. We may assume that $d_{D'}^+(q),d_{D'}^-(q) \in \{1,2\}$ and
		$q$ is not incident with any parallel arcs, as otherwise \YZ{let $D^*=D'$} and we can perform reduction~(g3a) or (g3b) on $D^*$.
		Let $a,b,c$ be the vertices in $D'$ adjacent to $q$.
		
		For the sake of contradiction assume that $\{a,b,c\} \subseteq X_5^+ \cup X_5^-$. As
		$X_5^-$ and $X_5^+$ are both independent sets and there is no arc from $X_5^-$ to $X_5^+$ we note that
		$q \not\in X_5^+ \cup X_5^-$ (as $q$ has arcs to and from $\{a,b,c\}$).
		This contradicts the fact that we cannot perform reduction~(n545) on $D$.
		So, $\{a,b,c\} \not\subseteq X_5^+ \cup X_5^-$.
		
		Therefore some vertex $v \in \{a,b,c\}$ has degree four in $D$ and therefore degree at most three in $D'-q$.
		If $v$ has degree three in $D'-q$, then by induction there is a
		sequence of \LH{(n2) or
			(n3) reductions} on $D'-q$ such that we can perform a good reduction on the resulting digraph \YZ{$D^*$},
		which completes the proof in this case as $D'-q$ is a (n3)-reduction (since $d_{D'}^+(q),d_{D'}^-(q) \in \{1,2\}$ and $q$ is not incident with any parallel arcs). So $v$ has degree at most $2$ in $D'-q$. We may assume that $v$ has degree exactly two and belongs to a $3$-cycle $C$ in $D'-q$, as otherwise \YZ{let $D^*=D'-q$ and we can perform reduction (g1), (g2a) or (g2b) on $D^*$. Thus, $d_{D'}(v)=3$ as $v$ is adjacent to $q$. Note that we may also assume that $d_{D'}^+(v),d_{D'}^-(v) \in \{1,2\}$ and $v$ is not incident with any parallel arcs for the same reason as $q$.}
		
		We first consider the case when $V(C) \not= \{a,b,c\}$. Let $u \in V(C) \setminus \{a,b,c\}$ be arbitrary, $D_1 = D'-v$ (which is a \AY{(n3)-reduction} on $D'$) and $D_2=D'-q -A(C)$ (which is a (n3)-reduction followed by a (n2)-reduction on $D'$). Observe that $d^+_{D_1}(u)=d^+_{D'}(u)-1$ and $d^+_{D_2}(u)=d^+_{D'}(u)-2$. So, in $D_1$ or $D_2$ the degree of $u$ is odd and at most three. If it is of degree one, then we can perform reduction (g1). Otherwise, $u$ has degree 3 and therefore we are done by induction. This completes the case when $V(C) \not= \{a,b,c\}$.

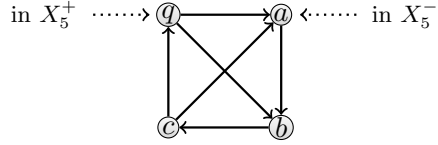
\begin{figure}[t]
\begin{center}
\tikzstyle{vertexY}=[circle,draw, top color=gray!5, bottom color=gray!30, minimum size=8pt, scale=0.99, inner sep=0.1pt]
\begin{tikzpicture}[scale=0.5]
\node (q) at (1,4) [vertexY] {$q$};
\node (a) at (4,4) [vertexY] {$a$};
\node (b) at (4,1) [vertexY] {$b$};
\node (c) at (1,1) [vertexY] {$c$};


\draw [->, line width=0.03cm] (q) -- (a);
\draw [->, line width=0.03cm] (q) -- (b);
\draw [->, line width=0.03cm] (c) -- (q);
\draw [->, line width=0.03cm] (a) -- (b);
\draw [->, line width=0.03cm] (b) -- (c);
\draw [->, line width=0.03cm] (c) -- (a);

\draw [->, dotted, line width=0.03cm] (-1,4) -- (0.5,4);
\draw (-2.3,4) node {{\footnotesize in $X_5^+$}};

\draw [->, dotted, line width=0.03cm] (6,4) -- (4.5,4);
\draw (7.3,4) node {{\footnotesize in $X_5^-$}};
\end{tikzpicture}

\caption{
Illustration of one of the cases in the proof of Theorem~\ref{thm:main}.} \label{fig:ThmMain}
\end{center} \end{figure}

		So, we now assume that $V(C) = \{a,b,c\}$.
		We may without loss of generality assume that $q a,q b, c q \in A(D')$ and $C=abca$.
		As $\{q,a,b,c\}$ \LH{forms} a clique in $D'$ (and therefore also in $D$) we note that it contains at most
		one vertex from $X_5^+$ and at most one vertex from $X_5^-$.
		As both $\{q,a,b\}$ and $\{q,a,c\}$ form transitive-triangles in $D$ we note that both of these
		sets contain a vertex from $X_5^+$ and a vertex from $X_5^-$ (as otherwise we could have performed
		reduction~(ntt) on $D$). As there are no arcs from $X_5^-$ to $X_5^+$ in $D$ we note that $q \in X_5^+$
		and $a \in X_5^-$ \AY{(see Figure~\ref{fig:ThmMain})}. This implies that $b$ has degree four in $D$ and therefore degree at most three in $D'-q$.
		If it has degree three in $D'-q$, then \AY{we are done by using induction on $D'-q$}. \YZ{If   $b$ has  degree at most two in $D'-q$,
			then this implies that it has degree three in $D'$ as $\{q,a,b,c\}$ forms a clique in $D'$}.
		
		Now let  $D^*=D'-b$ (which is a (n3)-reduction in $D'$). Note that $q$ has degree two and  does not belong to a $3$-cycle in $D^*$ (as $\{c,q,a\}$ form a transitive-triangle in $D'-b$), so we can perform reduction~(g2b) in $D^*$, which completes the
		proof of the claim.

		\2

		We now return to the proof of the theorem. Note that the following holds.

		\[
		0 = |A(D)|-|A(D)| =  \sum_{u \in V(D)} (d^+(u) - d^-(u)) = |X_5^+| - |X_5^-|.
		\]
		
		The above implies that $|X_5^+| = |X_5^-|$. If $|X_5^-|=0$ then $D$ is degree-$4$ and by Theorem \ref{thm:md34}, the theorem holds. Thus, we may assume that $|X_5^-|>0$.
		Let $x \in X_5^-$ be arbitrary and let $D' = D - x$.
		As there are no arcs from $X_5^-$ to $X_5^+$ and $X_5^-$ is independent there \LH{exists an arc $xy \in A(D)$ such that} the degree of $y$
		is $4$ in $D$. If there are two parallel arcs from $x$ to $y$, then let $D^* = D - x$ and note that we can perform the good reduction (g2a) on $D^*$ by deleting $y$. And, if there are \LH{no} two parallel arcs from $x$ to $y$ in $D$ then by Claim~A there exists a sequence of \LH{(n2) or
			(n3) reductions} on $D'$, resulting in a digraph $D^*$, such that we can perform a good reduction on $D^*$.
		Note that every \LH{(n2)-reduction or (n3)-reduction} removes exactly $3$ arcs and decreases the feedback arc set by at most one.
		Furthermore, note that  $\fas(D) \leq \fas(D') + 2$ and $|A(D')| = |A(D)| - 5$. So if we have performed $r$ such (n2)-reductions and (n3)-reductions, then the following holds.

		\[
		\begin{array}{rcl}
			|A(D^*)| & = & |A(D)| - 5 -3r. \\
			\fas(D) & \leq & \fas(D^*) + 2 + r. \\
		\end{array}
		\]
		
		We can now perform a good reduction on $D^*$ and obtain a new digraph $D^+$. The following now \LH{hold}
		for some integer $k$.
		
		\[
		\begin{array}{rcl}
			|A(D^+)| & \leq &  |A(D^*)| - (3k+1) \\
			& = & (|A(D)| - 5 -3r) - (3k+1) \\
			& = & |A(D)| -3r-3k-6. \\
			&   & \\
			\fas(D) & \leq &  \fas(D^*) + 2 + r  \\
			& \leq & (\fas(D^+) + k) + 2 + r.  \\
		\end{array}
		\]
		
		Therefore the following holds, by induction (on $D^+$),
		
		\[
		\fas(D) \leq  \fas(D^+) + k + 2 + r   \leq \frac{|A(D^+)|}{3} + k + 2 + r  \leq \frac{|A(D)|}{3}.
		\]
		
		This completes the proof of the theorem.
	\end{proof}
	
	\section{Proof of Theorem \ref{thm:5-rg}}\label{sec3}
	Let $D$ be a degree-$5$ oriented multigraph and let $u\in V(D)$ be arbitrary. Define $Q(u)$ as follows.
	\[
	Q(u)=\left\{
	\begin{aligned}
		&\{u\}, &if~d^+(u)\in \{0, 1, 4, 5\}; \\
		&N^+[u], &if~d^+(u)=2; \\
		&N^-[u], &if~d^-(u)=2. \\
	\end{aligned}
	\right.
	\]
	
	\begin{lemma}\label{lem:5-regular}
		Let D be a degree-$5$ oriented multigraph and let $S\subseteq V(D)$ be chosen such that the following holds. For all distinct $u, v\in S$ we have $Q(u)\cap Q(v) = \emptyset$ and there are no arcs between $Q(u)$ and $Q(v)$. Then, $\fas(D)\leq \frac{|A(D)|-|S|}{3}$.
	\end{lemma}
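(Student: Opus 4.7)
The plan is to prove Lemma~\ref{lem:5-regular} by induction on $|A(D)|$, using a slightly generalized inductive hypothesis that allows $\Delta(D) \le 5$ and merely requires each $u \in S$ to have $d(u) = 5$ in the current digraph (so that $Q(u)$ is well-defined exactly as in the lemma). In the base case $|A(D)| = 0$, we have $|S| = 0$ and $\fas(D) = 0$, and the bound is trivial.

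For the inductive step, I would first check whether any good reduction (g1)--(g5) or nice reduction (n0)--(n545) from the proof of Theorem~\ref{thm:main} can be performed at a vertex (or set of vertices) lying entirely outside $\bigcup_{u \in S} Q(u)$. If one can, apply it: by Lemma~\ref{lemA} or Lemma~\ref{lemB} we obtain $D'$ with $|A(D')| \le |A(D)| - (3k+1)$ (good) or $|A(D')| \le |A(D)| - 3k$ (nice) and $\fas(D) \le \fas(D') + k$. Because the reduction does not touch any $Q(u)$, every $u \in S$ keeps degree $5$ and the separation conditions on $S$ carry over unchanged to $D'$; the inductive hypothesis applied to $D'$ with the same $S$ then yields $\fas(D) \le (|A(D)| - |S|)/3$, with the surplus of one in the good case covering any slack and the nice case being in exact balance.

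If no such external reduction exists, I would pick $u \in S$ and act on $Q(u)$ directly. When $Q(u) = \{u\}$ (so $d^+(u) \in \{0,1,4,5\}$), the good reduction (g5) on $u$ removes $5$ arcs while increasing $\fas$ by at most $1$, and the resulting surplus of $2$ over the $3k$-baseline absorbs the decrease of $|S|$ by one. When $Q(u) = N^+[u]$ with $d^+(u) = 2$ (the case $Q(u) = N^-[u]$ being symmetric), I would start with (n55) on an arc $uy$ for some $y \in N^+(u)$: this is valid because $d(u) = d(y) = 5$ and $d^+(u) = 2$. If $|Q(u)| = 2$ (i.e.\ $u$ has a parallel double arc to $y$), this single (n55) already removes $8$ arcs for $\fas$-increase $2$, giving the needed surplus. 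Otherwise $|Q(u)| = 3$, in which case (n55) alone only supplies a balanced $9$-vs-$3$ trade, and I follow it with a good reduction such as (g4), (g3a), or (g3b) on the remaining out-neighbour $y'$ of $u$, whose degree has dropped to at most $4$ (and to $3$ if an arc $y_1 \to y_2$ or $y_2 \to y_1$ was present in $D$) by the first step.

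The hard part will be the exhaustive sub-case analysis when $|Q(u)| = 3$. Depending on whether an internal arc between $y_1$ and $y_2$ is present, on its direction and multiplicity, on the out-/in-degree splits of $y_1$ and $y_2$, and on which parallel arcs (if any) are created, the remaining vertex $y'$ after the (n55)-step may land in a residual state that no single good reduction immediately handles---for example, degree-$4$ with $d^+(y') = d^-(y') = 2$, or degree-$3$ with $d^+(y') \in \{1,2\}$ and no parallel arcs. For these residual sub-cases I expect the argument to chain several reductions (possibly invoking (ntt) or (n545) along an arc out of $y'$ that reaches a vertex outside $Q(u)$), engineered so that the accumulated savings along the whole sequence amount to at least one unit beyond the baseline $m/3$ bound. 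Checking that every such combination indeed produces savings $\ge 1$ is where the bulk of the technical work will live.
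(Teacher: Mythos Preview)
Your outline has two genuine gaps.

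First, the external-reduction step does not preserve your invariant. If $Q(u)=N^{+}[u]$, then only the \emph{out}-neighbours of $u$ lie in $Q(u)$; the in-neighbours of $u$ need not lie in any $Q(v)$. Hence a reduction performed at a vertex $x\notin\bigcup_{v\in S}Q(v)$ may well delete an in-neighbour of $u$ and drop $d(u)$ to $4$, destroying the requirement $d(u)=5$. The symmetric problem arises when $Q(u)=N^{-}[u]$, and when $Q(u)=\{u\}$ every neighbour of $u$ is external. So the claim ``because the reduction does not touch any $Q(u)$, every $u\in S$ keeps degree~$5$'' is false as stated.

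Second, and more seriously, the $|Q(u)|=3$ case cannot in general be closed by (n55) followed by further reductions of the listed kinds. Take $Q(u)=\{u,y_1,y_2\}$ with $d^{+}(u)=2$, no arc between $y_1$ and $y_2$, and $d^{+}(y_1)=d^{+}(y_2)=2$. After (n55) on $u,y_1$ you have removed $9$ arcs for a $+3$ in $\fas$ (zero surplus), and $y_2$ now has degree~$4$ with $d^{+}=d^{-}=2$, so (g4) does not apply. Any nice reduction you chain on next still contributes zero surplus, and there is no structural reason a good reduction must become available on or near $y_2$: its four remaining neighbours are unconstrained degree-$5$ vertices with $d^{+}\in\{2,3\}$. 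The reductions (ntt) and (n545) you name are nice, not good, and even when they apply they do not generate the one unit of surplus you still owe for this $u$.

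The paper sidesteps both issues with a direct (non-inductive) argument: delete all of $\bigcup_{u\in S}Q(u)$ at once, apply Theorem~\ref{thm:main} to the remaining digraph $D'$ to get an ordering with at most $|A(D')|/3$ backward arcs, and then reinsert each $Q(u)$ as a block at one or both ends of the ordering. Handled as a whole, $Q(u)$ costs at most $1$, $2$, or $4$ new backward arcs according as $|Q(u)|=1,2,3$, against $5$, $8$, or $\ge 12$ arcs deleted, giving surplus at least $1$ in every case. In the troublesome sub-case above, placing $u,y_1,y_2$ together at the end of the ordering creates at most $0+2+2=4$ backward arcs for $13$ arcs removed---exactly the unit of slack you need. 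Your reduction-by-reduction treatment of $Q(u)$ loses this because (n55) on $u,y_1$ already spends the arc $u\to y_2$ as part of its balanced $9$-for-$3$ trade, leaving nothing to pay for the decrement of $|S|$.
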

	\begin{proof}
		Let $S_0$ be those $u\in S$ with $Q(u)=\{u\}$. Let $S^+_1$ ($S^+_2$, resp.) be the set of those $u\in S$ with $Q(u)=N^+[u]$ and one out-neighbour (two out-neighbours, resp.) in $D$. Let $S^-_1$ ($S^-_2$, resp.) be the set of those $u\in S$ with $Q(u)=N^-[u]$ and one in-neighbour (two in-neighbours, resp.) in $D$. Note that $S=S_0\cup S^+_1\cup S^-_1\cup S^+_2\cup S^-_2$. Let $D'=D-\cup_{u \in S}Q(u)$. Then, by Theorem \ref{thm:main} there is an ordering $\sigma_{D'}$ of the vertices in $D'$ with at most $|A(D')|/3$ arcs. As for any pair of distinct vertices $u$ and $v$, $Q(u)\cap Q(v)=\emptyset$ and there are no arcs between $Q(u)$ and $Q(v)$, we have
		\[\frac{|A(D')|}{3}=\frac{|A(D)|-5|S_0|-8|S^+_1\cup S^-_1|-13|S^+_2\cup S^-_2|}{3}.\]
		Now, we extend $\sigma_{D'}$ to an ordering of $V(D)$ by doing the following for every vertex $u\in S$.

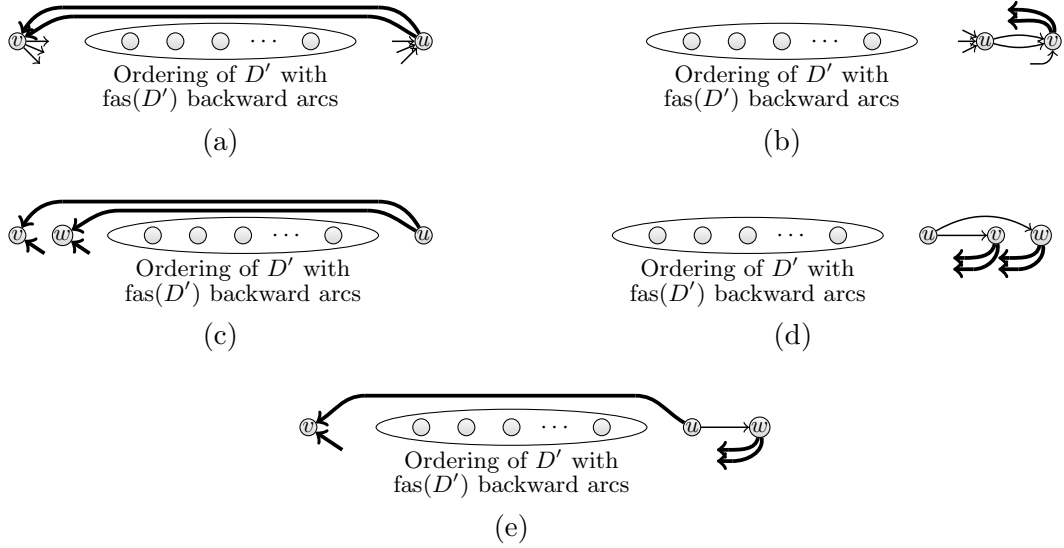
\begin{figure}[t]
\begin{center}
\tikzstyle{vertexY}=[circle,draw, top color=gray!5, bottom color=gray!30, minimum size=8pt, scale=0.8, inner sep=0.1pt]
\begin{tikzpicture}[scale=0.3]
\node (v) at (3,1) [vertexY] {$v$};
\node (u) at (21,1) [vertexY] {$u$};

\node (x1) at (8,1) [vertexY] {};
\node (x2) at (10,1) [vertexY] {};
\node (x3) at (12,1) [vertexY] {};
\draw (14,1) node {{\footnotesize $\cdots$}};
\node (xn) at (16,1) [vertexY] {};
\draw (12,1) ellipse (6 and 0.8);

\draw [line width=0.05cm] (u) to [out=150, in=0] (18.5,2.1);
\draw [line width=0.05cm] (18.5,2.1) -- (5.5,2.1);
\draw [->, line width=0.05cm] (5.5,2.1) to [out=180, in=30] (v);

\draw [line width=0.05cm] (u) to [out=120, in=0] (19,2.5);
\draw [line width=0.05cm] (19,2.5) -- (5,2.5);
\draw [->, line width=0.05cm] (5,2.5) to [out=180, in=60] (v);

\draw [->, line width=0.02cm] (20,0) -- (u);
\draw [->, line width=0.02cm] (19.8,0.5) -- (u);
\draw [->, line width=0.02cm] (19.6,1) -- (u);

\draw [->, line width=0.02cm] (v) -- (4,0);
\draw [->, line width=0.02cm] (v) -- (4.2,0.5);
\draw [->, line width=0.02cm] (v) -- (4.4,1);

\draw (12,-0.5) node {{\footnotesize Ordering of $D'$ with}};
\draw (12,-1.6) node {{\footnotesize $\fas(D')$ backward arcs}};

\draw (12,-3.5) node {(a)};
\end{tikzpicture} \hfill
\begin{tikzpicture}[scale=0.3]
\node (u) at (21,1) [vertexY] {$u$};
\node (v) at (24,1) [vertexY] {$v$};

\node (x1) at (8,1) [vertexY] {};
\node (x2) at (10,1) [vertexY] {};
\node (x3) at (12,1) [vertexY] {};
\draw (14,1) node {{\footnotesize $\cdots$}};
\node (xn) at (16,1) [vertexY] {};
\draw (12,1) ellipse (6 and 0.8);

\draw [->, line width=0.02cm] (u) to [out=15, in=165] (v);
\draw [->, line width=0.02cm] (u) to [out=-15, in=195] (v);

\draw [->, line width=0.02cm] (20,0.5) -- (u);
\draw [->, line width=0.02cm] (19.8,1) -- (u);
\draw [->, line width=0.02cm] (20,1.5) -- (u);

\draw [line width=0.05cm] (v) to [out=80, in=0]  (22.5,2.4);
\draw [->, line width=0.05cm] (22.5,2.4) -- (22,2.4);

\draw [line width=0.05cm] (v) to [out=100, in=0]  (22.2,1.9);
\draw [->, line width=0.05cm] (22.2,1.9) -- (22,1.9);

\draw [->, line width=0.02cm] (23.5,0) to [out=0, in=270] (v);
\draw [line width=0.02cm] (23,0) -- (23.5,0);

\draw (12,-0.5) node {{\footnotesize Ordering of $D'$ with}};
\draw (12,-1.6) node {{\footnotesize $\fas(D')$ backward arcs}};

\draw (12,-3.5) node {(b)};
\end{tikzpicture}

\vspace{0.4cm}

\begin{tikzpicture}[scale=0.3]
\node (v) at (3,1) [vertexY] {$v$};
\node (w) at (5,1) [vertexY] {$w$};

\node (u) at (21,1) [vertexY] {$u$};

\node (x1) at (9,1) [vertexY] {};
\node (x2) at (11,1) [vertexY] {};
\node (x3) at (13,1) [vertexY] {};
\draw (15,1) node {{\footnotesize $\cdots$}};
\node (xn) at (17,1) [vertexY] {};
\draw (13,1) ellipse (6 and 0.8);

\draw [line width=0.05cm] (u) to [out=150, in=0] (18.5,2.1);
\draw [line width=0.05cm] (18.5,2.1) -- (7.5,2.1);
\draw [->, line width=0.05cm] (7.5,2.1) to [out=180, in=30] (w);

\draw [line width=0.05cm] (u) to [out=120, in=0] (19,2.5);
\draw [line width=0.05cm] (19,2.5) -- (5,2.5);
\draw [->, line width=0.05cm] (5,2.5) to [out=180, in=60] (v);

\draw [->, line width=0.05cm] (4.2,0.2) -- (v);
\draw [->, line width=0.05cm] (6.2,0.2) -- (w);

\draw (13,-0.5) node {{\footnotesize Ordering of $D'$ with}};
\draw (13,-1.6) node {{\footnotesize $\fas(D')$ backward arcs}};

\draw (12,-3.5) node {(c)};
\end{tikzpicture} \hfill
\begin{tikzpicture}[scale=0.3]
\node (v) at (24,1) [vertexY] {$v$};
\node (w) at (26,1) [vertexY] {$w$};

\node (u) at (21,1) [vertexY] {$u$};

\node (x1) at (9,1) [vertexY] {};
\node (x2) at (11,1) [vertexY] {};
\node (x3) at (13,1) [vertexY] {};
\draw (15,1) node {{\footnotesize $\cdots$}};
\node (xn) at (17,1) [vertexY] {};
\draw (13,1) ellipse (6 and 0.8);

\draw [->, line width=0.02cm] (u) -- (v);
\draw [->, line width=0.02cm] (u) to [out=30, in=150] (w);

\draw [line width=0.05cm] (v) to [out=260, in=0] (22.6,0);
\draw [->, line width=0.05cm] (22.6,0) -- (22.1,0);

\draw [line width=0.05cm] (v) to [out=280, in=0] (23,-0.5);
\draw [->, line width=0.05cm] (23,-0.5) -- (22.1,-0.5);

\draw [line width=0.05cm] (w) to [out=260, in=0] (24.6,0);
\draw [->, line width=0.05cm] (24.6,0) -- (24.1,0);

\draw [line width=0.05cm] (w) to [out=280, in=0] (25,-0.5);
\draw [->, line width=0.05cm] (25,-0.5) -- (24.1,-0.5);

\draw (13,-0.5) node {{\footnotesize Ordering of $D'$ with}};
\draw (13,-1.6) node {{\footnotesize $\fas(D')$ backward arcs}};

\draw (15,-3.5) node {(d)};
\end{tikzpicture} \hfill

\vspace{0.4cm}

\begin{tikzpicture}[scale=0.3]
\node (v) at (4,1) [vertexY] {$v$};

\node (u) at (21,1) [vertexY] {$u$};
\node (w) at (24,1) [vertexY] {$w$};

\node (x1) at (9,1) [vertexY] {};
\node (x2) at (11,1) [vertexY] {};
\node (x3) at (13,1) [vertexY] {};
\draw (15,1) node {{\footnotesize $\cdots$}};
\node (xn) at (17,1) [vertexY] {};
\draw (13,1) ellipse (6 and 0.8);

\draw [->, line width=0.02cm] (u) -- (w);
\draw [line width=0.05cm] (u) to [out=150, in=0] (18.5,2.4);
\draw [line width=0.05cm] (18.5,2.4) -- (6.5,2.4);
\draw [->, line width=0.05cm] (6.5,2.4) to [out=180, in=30] (v);

\draw [->, line width=0.05cm] (5.5,0) -- (v);

\draw [line width=0.05cm] (w) to [out=260, in=0] (22.6,0);
\draw [->, line width=0.05cm] (22.6,0) -- (22.1,0);

\draw [line width=0.05cm] (w) to [out=280, in=0] (22.8,-0.5);
\draw [->, line width=0.05cm] (22.8,-0.5) -- (22.1,-0.5);

\draw (13,-0.5) node {{\footnotesize Ordering of $D'$ with}};
\draw (13,-1.6) node {{\footnotesize $\fas(D')$ backward arcs}};

\draw (13,-3.5) node {(e)};
\end{tikzpicture}

\caption{
Illustration of the different cases in Lemma~\ref{lem:5-regular}.  The thick arcs denote backward arcs.} \label{fig:5-regular}
\end{center} \end{figure}

		{\bf Case 1.} $d^+(u)\geq 4$ or $d^-(u)\geq 4$ (i.e. $u\in S_0$). In this case, we add $u$ to the front of the ordering if $d^+(u)\geq 4$ and to the end of the ordering if $d^-(u)\geq 4$. Clearly, in both cases, we add at most one new backward arc.
		
		{\bf Case 2.} $u\in S^+_1\cup S^-_1$. We only consider the case when $u\in S^+_1$ as the other is similar.
Assume that there are two parallel arcs from $u$ to $v$.
If $d^+(v)=3$, then we add $u$ to the end of the ordering and $v$ to the front of the ordering \AY{(see Figure~\ref{fig:5-regular}(a))}.
If $d^+(v)\leq 2$, then we add \AY{$u$ to the end of the ordering and $v$ after $u$ (see Figure~\ref{fig:5-regular}(b))}. In each case, we add at most two new backward arcs.
		
		{\bf Case 3.} $u\in S^+_2\cup S^-_2$. We only consider the case when $u\in S^+_2$ as the proof for the other one is similar. Assume that $N^+(u)=\{v,w\}$. We consider the following subcases.
		
		{\bf Subcase 3.1.} $d^+(v)\geq 3$ and $d^+(w)\geq 3$. In this case, we add $v$ and $w$ to the front of the ordering and $u$ to the end of the
		the ordering \AY{(see Figure~\ref{fig:5-regular}(c))}. Observe that we add at most four backward arcs.
		
		{\bf Subcase 3.2.} $d^+(v)\leq 2$ and $d^+(w)\leq 2$. Then, we add \AY{$u$} to the end of the ordering and put $v$ and $w$ after $u$ \AY{(see Figure~\ref{fig:5-regular}(d))}. Note that we add at most four backward arcs as $d^+(v)+d^+(w)\leq 4$.
		
		{\bf Subcase 3.3.} One of $v$ and $w$ has out-degree at most 2, and the other has out-degree at least 3. Assume without loss of generality that $d^+(v)\geq 3$ and $d^+(w)\leq 2$. Then we add $u$ to the end of the ordering and $w$ after $u$, and $v$ to the front of the ordering \AY{(see Figure~\ref{fig:5-regular}(e))}. One can observe that we have added at most four backward arcs (at most two arcs from $w$ and at most two arcs to $v$).
		
		Note that as for every pair of distinct vertices $u$ and $v$, $Q(u)\cap Q(v)=\emptyset$ and there are no arcs between $Q(u)$ and $Q(v)$ and therefore in particular there is no backward arc between $Q(u)$ and $Q(v)$ in the new ordering. Thus, after considering all vertices in $u$, we have an ordering of $V(D)$ in $D$, whose number of backward arcs is at most the following.
		\begin{eqnarray*}
			&&\frac{|A(D)|-5|S_0|-8|S^+_1\cup S^-_1|-13|S^+_2\cup S^-_2|}{3}+ |S_0|+2|S^+_1\cup S^-_1|+4|S^+_2\cup S^-_2|\\
			&=&\frac{|A(D)|-2|S_0|-2|S^+_1\cup S^-_1|-|S^+_2\cup S^-_2|}{3}\\
			&\leq & \frac{|A(D)|-|S|}{3},
		\end{eqnarray*}
		which completes the proof.
	\end{proof}
	
	\2
	
	For any digraph $D$ and any vertex $S\subseteq V(D)$, we use $N_D(S)$ to denote the set of in- and out-neighbours of $S$ in $D$, i.e., $N_D(S)=\{u\in V(D)\setminus S: uv\in A(D)~or~vu\in V(D)$ for some vertices $v\in S\}$. Let $N_D^2(S)=N_D(N_D(S))\setminus S$. Now we are ready to prove Theorem \ref{thm:5-rg}.

	\2
	
	\noindent{\bf Theorem \ref{thm:5-rg}.} {\em If $D$ is a degree-$5$ oriented multigraph, then $\fas (D)\leq \YZ{24n/29}$.}
	\begin{proof}	\YZ{Let $n=|V(D)|.$} As $D$ is degree-$5$, $|A(D)|= 5n/2$. \YZ{Note that $(5/2-1/58)/3=24/29.$}
		Thus, by Lemma \ref{lem:5-regular}, we only need to \LH{show that} there is a set $S$ \YZ{satisfying} the conditions in Lemma \ref{lem:5-regular} and $|S|\geq n/58$.
		
		Consider an auxiliary graph $H$ with vertex set $V(H)=V(D)$ where $uv\notin E(H)$ if and only if $Q(u)\cap Q(v)=\emptyset$ and there is no arcs between $Q(u)$ and $Q(v)$ in $D$. Note that any independent set in $H$ is a set \LH{that satisfies} the conditions in Lemma \ref{lem:5-regular}. And the independent number of $H$ is at least $|V(H)|/\chi(H)$. Thus, by Brooks' Theorem, we only need to show $\Delta(H)\leq 57$.
		
		{\bf Claim A:} Let $u$ be an arbitary vertex in $V(D)$. If $uu'\in E(H)$ then $u'\in N_D(u)\cup N_D^2(u)\cup N_D^2(Q(u)\setminus \{u\})$. In particular, $d_H(u)\leq 25+|N_D^2(Q(u)\setminus \{u\})\setminus N_D(u)|$.
		
		{\bf Proof of Claim A:} By the definition, we have $uu'\in E(H)$ if and only if $Q(u)\cap Q(u')\neq \emptyset$ or $N_D(u)\cap Q(u')\neq \emptyset$ or $N_D(Q(u)\setminus \{u\})\cap Q(u')\neq \emptyset$. \YZ{Note that $(Q(u)\cap Q(u'))\cup (N_D(u)\cap Q(u'))\neq \emptyset$ if and only if $N_D[u]\cap Q(u')\neq \emptyset$ which implies that $u'\in  N_D(u)\cup N_D^2(u)$}. And $N_D(Q(u)\setminus \{u\})\cap Q(u')\neq \emptyset$ implies that $u'\in N_D^2(u)\cup N_D^2(Q(u)\setminus \{u\})$. Clearly, $|N_D(u)|+|N_D^2(u)|\leq 5+5\times 4=25$. This completes the proof of Claim A.
		
		Recall that $|Q(u)|\in \{1,2,3\}$ for every $u\in V(D)$. If $Q(u)=\{u\}$, then $|N_D^2(Q(u)\setminus \{u\})\setminus N_D(u)|=0$ and therefore $d_H(u)\leq 25$. If $|Q(u)|=2$, then let $Q(u)\setminus \{u\}=\{v\}$. Since there are two parallel arcs in $D$ between $u$ and $v$, $|N_D^2(Q(u)\setminus \{u\})\setminus N_D(u)|=|N_D^2(v)\setminus N_D(u)|\leq 3\times 4=12$. Thus, by Claim A, $d_H(u)\leq 37$.  If $|Q(u)|=3$, then let $N_D(u)=\{v,w\}$. Thus, $|N_D^2(Q(u)\setminus \{u\})\setminus N_D(u)|\leq |N_D^2(v)\setminus N_D(u)|+|N_D^2(w)\setminus N_D(u)|\leq 2\times 4\times 4=32$ and therefore $d_H(u)\leq 25+32\leq 57$, which completes the proof. \end{proof}
	
	\section{Lower bounds for $c''_{\leq 5}$, $c'_{\leq 6}$ and $c''_{\leq 6}$}\label{sec:examples}
	
	We will obtain lower bounds for $c''_{\leq 5}$, $c'_{\leq 6}$ and $c''_{\leq 6}$ in the following two propositions. Note that all constructions are oriented graphs.
	\begin{proposition}\label{prop1}
\YZ{We have $c''_5=c''_{\leq 5}\geq 5/7$.}
	\end{proposition}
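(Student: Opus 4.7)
By Proposition~\ref{relations}(i), $c''_5=c''_{\leq 5}$, so it suffices to produce oriented graphs $D$ with $\Delta(D)\leq 5$ and $\fas(D)/|V(D)|\geq 5/7$. My approach is to construct a single oriented graph $H$ with $n(H)=7$, $\Delta(H)\leq 5$, and $\fas(H)\geq 5$; taking $k$ vertex-disjoint copies of $H$ then yields oriented graphs with $n=7k$, $\Delta\leq 5$, and $\fas\geq 5k$, establishing $c''_{\leq 5}\geq 5/7$.

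Note first that the bound $\fas(H)\geq 5$ is maximal, since Theorem~\ref{thm:main} gives $\fas(H)\leq m(H)/3\leq \lfloor 7\cdot 5/2\rfloor/3 = 17/3 < 6$. Consequently $H$ must be nearly $5$-regular with at least $15$ arcs, so the construction is forced to be tight. A natural candidate for $H$ is obtained from the Paley tournament $QR_7$ (arcs $i\to j$ with $j-i\in\{1,2,4\}\pmod 7$), which is $6$-regular with $\fas(QR_7)=7$, by deleting four carefully chosen arcs so that every vertex drops to degree at most five (the four deleted arcs must cover all seven vertices, so their underlying subgraph is essentially a path or caterpillar on seven vertices). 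Alternatively one can build $H$ from scratch by packing a small collection of arc-disjoint directed triangles on the seven vertices and then adding a few 'extra' arcs that create enough additional short cycles without violating the degree bound.

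The main obstacle is verifying $\fas(H)\geq 5$. One cannot do this by packing five arc-disjoint directed cycles: a vertex of degree at most $5$ participates in at most two arc-disjoint directed cycles (each contributes two units of degree there), giving at most $2\cdot 7=14$ vertex-cycle incidences, while any five directed cycles require at least $5\cdot 3=15$. So the lower bound is off by one from the integer cycle-packing certificate. Two avenues remain: either (i) exhibit a \emph{fractional} cycle packing of total weight $5$ and invoke LP duality for the feedback arc set polytope, which is feasible since the arc-budget and vertex-degree constraints only give the weaker bounds $\sum_C x_C\leq m/3\leq 17/3$ and $\sum_C x_C \leq 35/6$ respectively; or (ii) enumerate vertex orderings of $H$ directly, exploiting the symmetry group of $H$ to collapse the analysis to a small number of representative orderings. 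I expect route (ii) to be cleanest once the circulant structure of $H$ is fixed, since rotational symmetry drastically reduces the number of orderings that need separate inspection, and the minimum count of backward arcs can be read off from a case check.
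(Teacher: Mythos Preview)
Your high-level strategy is correct and matches the paper: reduce to exhibiting a single $7$-vertex oriented graph $H$ with $\Delta(H)\le 5$ and $\fas(H)\ge 5$, then invoke Proposition~\ref{relations}(i). You also correctly diagnose that an integral cycle packing cannot certify $\fas\ge 5$, and you list fractional cycle packing as option~(i).

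Where the proposal falls short is that it never actually specifies $H$ or verifies the bound; it remains a plan rather than a proof. Moreover, your instinct that route~(ii) (symmetry-reduced enumeration of orderings) would be ``cleanest'' is off: the paper takes route~(i) and it is a two-line argument. The paper's $D_7$ is \emph{not} $QR_7$ minus four arcs; it is the circulant with $u_i\to u_{i+1}$ and $u_{i+2}\to u_i$ (indices mod~$7$) together with two extra arcs $u_1u_5,\,u_2u_6$, giving $16$ arcs and $\Delta=5$. One then writes down nine explicit $3$-cycles (the seven cycles $u_iu_{i+1}u_{i+2}u_i$ plus $u_1u_5u_3u_1$ and $u_2u_6u_4u_2$) and checks that every arc lies in at most two of them. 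Putting weight $\tfrac12$ on each triangle is a fractional cycle packing of value $9/2$, so $\fas(D_7)\ge\lceil 9/2\rceil=5$. No ordering enumeration is needed.

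So the missing piece is exactly the concrete object and its nine-triangle certificate. If you pursue your $QR_7$-minus-four-arcs idea instead, you would still owe an explicit choice of the four arcs and a fractional packing (or other) certificate for the resulting graph; that is a genuinely different construction and would need its own verification.
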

	\begin{proof}
		We first construct an oriented graph, $D_7$, with $|V(D_7)|=7$, $\Delta(D_7)= 5$ and $\fas(D_7) \geq 5$.
		Let $V(D_7)=\{u_1,u_2,\ldots, u_7\}$ and let $A(D)=A_1 \cup A_2 \cup A_3$, where $A_1$, $A_2$ and $A_3$ are defined as follows (where all indices are taken modulo 7).
		\[
		\begin{array}{rcl}
			A_1 & = & \{u_i u_{i+1} \; | \; i=1,2,3,4,5,6,7 \}; \\
			A_2 & = & \{u_{i+2} u_i \; | \; i=1,2,3,4,5,6,7 \}; \\
			A_3 & = & \{u_1 u_5, u_2 u_6 \}. \\
		\end{array}
		\]
		Note that there \LH{exist} the following nine $3$-cycles in $D_7$.
		\[
		T=\{ u_i u_{i+1} u_{i+2} u_i \; | \; i=1,2,3,4,5,6,7 \}  \cup \{u_1 u_5 u_3 u_1, u_2 u_6 u_4 u_2 \}
		\]
				Furthermore any arc in $D_7$ belongs to at most two of the nine $3$-cycles in $T$.
		Therefore $\fas(D_7) \geq \frac{9}{2}$ which implies that $\fas(D_7) \geq 5$ as $\fas(D_7)$ is an integer.
		This completes the construction of $D_7$.
\YZ{		Now we have $\fas(D_{7}) \geq 5 = \frac{5 n}{7}$. Hence, $c''_{\leq 5}\geq 5/7$. By Proposition \ref{relations}(i), $c''_5=c''_{\leq 5}\geq 5/7$.	}
	\end{proof}
	
	\2
	
	\begin{corollary}\label{rem}
		There is a counterexample to Conjecture \ref{conj2}.
	\end{corollary}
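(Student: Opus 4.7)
The plan is to verify that the oriented graph $D_7$ already constructed in Proposition~\ref{prop1} is itself the desired counterexample, so no new construction is needed. What remains is to check the three properties required by Conjecture~\ref{conj2}: maximum degree at most $5$, strong connectivity, and $\fas(D_7) > 2n/3$.

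First, I would note that the bound $\Delta(D_7)\le 5$ is recorded in the proof of Proposition~\ref{prop1}; it can be confirmed by a direct count from the definitions of $A_1, A_2, A_3$ (for example, $d_{D_7}(u_1)=5$, $d_{D_7}(u_3)=4$, etc.). Second, strong connectivity is immediate because the set $A_1=\{u_iu_{i+1}: 1\le i\le 7\}$ (indices taken modulo $7$) forms a directed Hamiltonian cycle on $V(D_7)$, so every vertex can reach every other vertex using arcs of $A_1$ alone.

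Third, and this is the key numerical comparison, we already have $\fas(D_7)\ge 5$ from Proposition~\ref{prop1}. Since $n=|V(D_7)|=7$, we have $\tfrac{2n}{3}=\tfrac{14}{3}<5\le \fas(D_7)$, so the inequality asserted by Conjecture~\ref{conj2} fails for $D_7$.

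There is essentially no obstacle here: the only thing to verify beyond what Proposition~\ref{prop1} already establishes is the strong connectivity of $D_7$, which is a one-line observation about the Hamiltonian cycle contained in $A_1$. The short proof I would write simply points out that Proposition~\ref{prop1} gives a strongly connected oriented graph on $7$ vertices with $\Delta\le 5$ and $\fas\ge 5>\tfrac{14}{3}=\tfrac{2\cdot 7}{3}$, which directly disproves Conjecture~\ref{conj2}.
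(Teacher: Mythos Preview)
Your proof is correct, and in fact it is slightly more direct than the paper's own argument. The paper does not use $D_7$ itself; instead it takes two copies of $D_7$ and adds arcs between the degree-$4$ vertices to produce a strongly connected degree-$5$ oriented graph $D_{14}$, and then argues that $\fas(D_{14})\ge 2\fas(D_7)\ge 10>28/3=2\cdot 14/3$. You instead observe that $D_7$ is already strongly connected because $A_1$ is a Hamiltonian directed cycle, so $D_7$ itself violates Conjecture~\ref{conj2} via $\fas(D_7)\ge 5>14/3$. Your route is shorter and requires no new construction; the paper's route has the incidental advantage of producing a counterexample that is degree-$5$ (every vertex of degree exactly $5$) rather than merely $\Delta\le 5$, which is a marginally stronger refutation but not needed for the corollary as stated.
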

	\begin{proof}
\AY{Note that we can take two copies of $D_7$, defined in the proof of Proposition~\ref{prop1}, and add arcs between them in order to obtain a strong degree-5 oriented graph $D_{14}$.}
Recall that Conjecture \ref{conj2} states that for every strongly connected oriented graph $D$ with $\Delta(D)\le 5$, $\fas(D)\le 2n/3.$ Note \AY{that $D_{14}$} provides a counterexample to this conjecture as \AY{$\fas(D_{14}) \geq 2 \cdot \fas(D_7) = 10$ and} \YZ{$2/3<5/7$}.
	\end{proof}
	
	
	\begin{proposition}
		There exists a degree-$6$ oriented graph $D$ with $\fas(D)\geq 25m/72$. In particular, $c'_{\leq 6} \geq 25/72$ and \AY{$c''_{\leq 6} = c''_6 \geq 75/72$.}
	\end{proposition}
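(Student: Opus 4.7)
The plan is to exhibit an explicit degree-$6$ oriented graph $D$ and verify $\fas(D) \geq 25|A(D)|/72$; the two claimed coefficient bounds then follow automatically. Since $D$ is degree-$6$, $|A(D)| = 3|V(D)|$, so the asserted inequality is equivalent to $\fas(D) \geq 25|V(D)|/24$, which gives $c''_{\leq 6} \geq 75/72$ and, by Proposition~\ref{relations}(i), also $c''_6 = c''_{\leq 6} \geq 75/72$. The bound $c'_{\leq 6}\geq 25/72$ is immediate from the definition.

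In the spirit of the construction $D_7$ from Proposition~\ref{prop1}, my first attempt would be a circulant $\mathrm{Cay}(\mathbb{Z}_n, S)$ on three out-generators $S = \{a,b,c\}$ with $S \cap (-S) = \emptyset$ (to avoid $2$-cycles), for small $n$ such as $n \in \{9, 11, 12\}$; every such digraph is automatically degree-$6$ and $2$-cycle-free. To lower-bound $\fas(D)$ I would count the number $T$ of $3$-cycles and the maximum number $k$ of $3$-cycles through any single arc, yielding the $3$-cycle packing bound $\fas(D) \geq T/k$, exactly the bound used for $D_7$. If this does not reach the target $25|A(D)|/72$, the next step is to enlarge the packing to include $4$- and $5$-cycles (plentiful in a well-chosen circulant) or, using the vertex-transitive symmetry of a circulant, to reduce the verification of $\fas(D) \geq 25|V(D)|/24$ to analysing a small number of inequivalent vertex orderings. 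A back-up strategy is to adjoin a small gadget to $D_7$ that lifts all degrees to $6$ while increasing $\fas$ faster than the number of added arcs.

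The main obstacle is a hard barrier at ratio $1/3 = 24/72$: summing the number of $3$-cycles containing $e$ over all arcs $e$ gives $3T$, so the arc-maximum $k$ satisfies $k \geq 3T/|A(D)|$, hence the $3$-cycle packing bound $\fas(D) \geq T/k$ can never exceed $|A(D)|/3$. To prove the ratio $25/72$ one must genuinely beat $1/3$ by the slim margin $1/72$, so the construction must either be rich enough in longer cycles to admit a stronger LP cycle-packing bound, or be sufficiently asymmetric that a direct analysis of vertex orderings forces more than $|A(D)|/3$ backward arcs. I expect this to be the main technical challenge; once a suitable $D$ with $\fas(D) \geq 25|V(D)|/24$ is produced and the lower bound verified, the two coefficient inequalities follow immediately.
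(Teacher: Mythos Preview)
Your proposal is a plan, not a proof: no specific oriented graph is exhibited, and you yourself identify that your primary circulant idea may fail. The diagnosis of the $1/3$ barrier for pure $3$-cycle packing is correct and is exactly the crux, but you stop at ``I expect this to be the main technical challenge''. That is the gap.

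The paper's construction is in fact your ``back-up strategy'', carried out concretely. Starting from $D_7$, one adjoins a single vertex $u_8$ together with the four arcs $u_5u_8,\,u_8u_1,\,u_6u_8,\,u_8u_2$, obtaining an $8$-vertex, $20$-arc oriented graph $D_8$. The key step that breaks the $1/3$ barrier is a cycle packing containing one $4$-cycle: to the nine $3$-cycles $T$ of $D_7$ one adds the three $3$-cycles $u_5u_8u_1u_5$, $u_6u_8u_2u_6$, $u_1u_6u_8u_1$ and the single $4$-cycle $u_7u_5u_8u_2u_7$, for $13$ cycles in total in which every arc of $D_8$ lies in exactly two. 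Hence $\fas(D_8)\ge\lceil 13/2\rceil=7$, a ratio of $7/20>1/3$. Finally one takes three disjoint copies of $D_8$ and adds four vertex-disjoint $3$-cycles through the degree-$4$ vertices $u_3,u_4,u_7,u_8$ of each copy, obtaining a degree-$6$ oriented graph $D$ on $24$ vertices and $72$ arcs with $\fas(D)\ge 3\cdot 7+4=25=25m/72$.

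So the missing ingredients you would still need to supply are precisely (i) the choice of gadget vertex and its four arcs, (ii) the observation that a single $4$-cycle added to the $3$-cycle family makes the packing tight with every arc covered exactly twice, and (iii) the tripling-plus-$3$-cycles trick to regularise the degrees to $6$ without losing the ratio.
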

	\begin{proof}
		Let us take a copy of $D_7$ defined in the proof of Proposition \ref{prop1} where $V(D_7)=\{u_1,u_2,\ldots, u_7\}$ and $A(D_7)=A_1 \cup A_2 \cup A_3$. We then add a vertex $u_8$ and the \LH{arc set} $A_4 = \{u_5 u_8, u_8 u_1, u_6 u_8, u_8 u_2 \}$. The resulting oriented graph is denoted by $D_8$.
		
		Recall that $T$ is a set of nine $3$-cycles in $D_7$.  Define $T^*$ as follows.		
		\[
		T^* = T \cup \{u_5 u_8 u_1 u_5, u_6 u_8 u_2 u_6, u_7 u_5 u_8 u_2 u_7, u_1 u_6 u_8 u_1 \}.
		\]
		Note that $T^*$ contains 13 cycles (twelve $3$-cycles and one $4$-cycle) and every arc in $D_8$ belongs to exactly
		two cycles in $T^*$. Therefore $\fas(D_8) \geq 13/2$, which implies that $\fas(D_8) \geq 7$, as $\fas(D_8)$ is an integer.
		Define $D_{24}$ as follows.  Take three copies of $D_8$, denoted by $D_8^1$, $D_8^2$ and $D_8^3$, respectively. Let $V(D_8^i)=\{u_1^i,u_2^i,\ldots, u_8^i\}$, such that $u_j^i$ is the copy of $u_j$ in $D_8$. Now add the following four $3$-cycles to $D_8^1 \cup D_8^2 \cup D_8^3$,
		\[
		{\cal C}_4 = \{u_3^1 u_3^2 u_3^3 u_3^1, u_4^1 u_4^2 u_4^3 u_4^1, u_7^1 u_7^2 u_7^3 u_7^1, u_8^1 u_8^2 u_8^3 u_8^1 \}.
		\]
		Let the resulting digraph be denoted by $D$. It is not difficult to see that $D$ is a degree-$6$ oriented graph. Furthermore any feedback arc set of $D$ contains at least one arc from each of the four $3$-cycles in ${\cal C}_4$. So $\fas(D) \geq 4 + 3 \cdot \fas(D_8) \geq 25$. Now we note that the following holds: $\fas(D) \geq 25 = 25 \cdot m / (3 \cdot 24) = \frac{25}{72} m$. Hence, $c'_{\leq 6} \geq 25/72$. Also, $\fas(D)\ge \frac{25}{72} m=\frac{75}{72}n$ and so \AY{$c''_{\leq 6} = c''_6  \geq 75/72$, by Proposition~\ref{relations}(i).}
	\end{proof}

\end{document}